\numberwithin{equation}{section}
\newtheorem{Theorem}{Theorem}[section]
\newtheorem{Lemma}[Theorem]{Lemma}
\newtheorem{Proposition}[Theorem]{Proposition}
 { \theoremstyle{definition}
\newtheorem{Example}[Theorem]{Example}
\newtheorem{Remark}[Theorem]{Remark} }
\begin{document}

\allowdisplaybreaks

\newcommand{\arXivNumber}{1811.07138}

\renewcommand{\PaperNumber}{032}

\FirstPageHeading

\ShortArticleName{Construction of Two Parametric Deformation of KdV-Hierarchy and its Solution}

\ArticleName{Construction of Two Parametric Deformation\\ of KdV-Hierarchy and Solution in Terms\\ of Meromorphic Functions on the Sigma Divisor\\ of a Hyperelliptic Curve of Genus 3}

\Author{Takanori AYANO~$^\dag$ and Victor M.~BUCHSTABER~$^\ddag$}

\AuthorNameForHeading{T.~Ayano and V.M.~Buchstaber}

\Address{$^\dag$~Osaka City University, Advanced Mathematical Institute, \\
\hphantom{$^\dag$}~3-3-138 Sugimoto, Sumiyoshi-ku, Osaka, 558-8585, Japan}
\EmailD{\href{mailto:ayano@sci.osaka-cu.ac.jp}{ayano@sci.osaka-cu.ac.jp}}
\URLaddressD{\url{https://researchmap.jp/ayano75/?lang=english}}

\Address{$^\ddag$~Steklov Mathematical Institute of Russian Academy of Sciences,\\
\hphantom{$^\ddag$}~8 Gubkina Street, Moscow, 119991, Russia}
\EmailD{\href{mailto:buchstab@mi-ras.ru}{buchstab@mi-ras.ru}}

\ArticleDates{Received November 21, 2018, in final form April 11, 2019; Published online April 27, 2019}

\Abstract{Buchstaber and Mikhailov introduced the polynomial dynamical systems in $\mathbb{C}^4$ with two polynomial integrals on the basis of commuting vector fields on the symmetric square of hyperelliptic curves. In our previous paper, we constructed the field of meromorphic functions on the sigma divisor of hyperelliptic curves of genus~3 and solutions of the systems for $g=3$ by these functions. In this paper, as an application of our previous results, we construct two parametric deformation of the KdV-hierarchy. This new system is integrated in the meromorphic functions on the sigma divisor of hyperelliptic curves of genus~3. In Section~8 of our previous paper [\textit{Funct. Anal. Appl.} \textbf{51} (2017), 162--176], there are miscalculations. In appendix of this paper, we correct the errors.}

\Keywords{Abelian functions; hyperelliptic sigma functions; polynomial dynamical systems; commuting vector fields; KdV-hierarchy}

\Classification{14K25; 14H40; 14H42; 14H70}

\section{Introduction}

Let $V_g$ be a hyperelliptic curve of genus $g$ defined by
\begin{gather}
V_g=\big\{(X,Y)\in\mathbb{C}^2\,|\,Y^2=X^{2g+1}\!+y_4X^{2g-1}\!-y_6X^{2g-2}\!+\cdots+y_{4g}X\!-y_{4g+2},\,y_i\in\mathbb{C}\big\}.\!\!\!\!\label{intro}
\end{gather}
A meromorphic function on the Jacobian of $V_g$ is called hyperelliptic function. The theory of hyperelliptic functions has deep relations with that of KdV-hierarchy. The KdV-hierarchy is an infinite system of differential equations defined by
\begin{gather*} U_{t_k}=\chi_kU,\qquad k=1,2,\dots,\end{gather*}
for a function $U=U(t_1,t_2,\dots)$. The functions $\chi_kU$ are determined by the recursion
\begin{gather*}\chi_{k+1}U=\mathcal{R}\chi_kU\end{gather*}
with the initial condition $\chi_1=\partial/\partial t_1$, where $\mathcal{R}$ is the Lenard operator
\begin{gather*}\mathcal{R}=\frac{1}{4}\frac{\partial^2}{\partial t_1^2}-U-\frac{1}{2}U_{t_1}\left(\frac{\partial}{\partial t_1}\right)^{-1},\end{gather*}
where $(\partial/\partial t_1)^{-1}$ implies an integral with respect to $t_1$. The KdV-equation is obtained for $k=2$
\begin{gather*}U_{t_2}=\frac{1}{4}U_{t_1t_1t_1}-\frac{3}{2}UU_{t_1}.\end{gather*}
In the theory of hyperelliptic functions associated with the model~(\ref{intro}), the hyperelliptic sigma functions play an important role. The hyperelliptic sigma functions $\sigma(w_1,w_3,\dots,w_{2g-1})$ are entire functions of $g$ complex variables, which are originally introduced by Klein as a generalization of the Weierstrass elliptic sigma functions. Baker made a significant contribution of the theory of sigma functions:
for hyperelliptic curves of genera~2 and~3, he obtained explicit expressions for higher logarithmic derivatives of sigma functions of many variables in the form of polynomials in the second and the third logarithmic derivatives of these functions \cite{Baker1,Baker2, Baker3}. Relatively recently it was shown that these differential polynomials give the fundamental equations of mathematical physics, including KdV-hierarchy and KP-equations (see \cite{B1,B2,M1}).

The surface determined by the equation $\sigma(w_1,\dots,w_{2g-1})=0$ in the Jacobian of $V_g$ is called the sigma divisor and denoted by $(\sigma)$. Let $\mathcal{F}((\sigma))$ be the field of meromorphic functions on the sigma divisor of the hyperelliptic curves of genus~3. The functions $f\in\mathcal{F}((\sigma))$ are considered as meromorphic functions on $\mathbb{C}^3$ whose restrictions to the sigma divisor $(\sigma)$ are 6-periodic. In~\cite{BM}, the polynomial dynamical systems in $\mathbb{C}^4$ with two polynomial integrals are constructed on the basis of commuting vector fields on the symmetric square of the hyperelliptic curves $V_g$. In~\cite{AB}, for $g=3$, the solutions of the systems are constructed in terms of the functions of $\mathcal{F}((\sigma))$.

For $g=2$, the dynamical systems of \cite{BM} are related to the KdV-equation~\cite{B1,BM2}. In this paper we consider the case of $g=3$ and construct two parametric deformation of the KdV-hierarchy by using the dynamical systems of~\cite{BM} (Theorem~\ref{dkdv}). We construct a solution of the new system in terms of functions of $\mathcal{F}((\sigma))$ (Theorem~\ref{dkdvs}). If $y_{12}=y_{14}=0$, then the new system goes to the system of the KdV-hierarchy in \cite[Theorem~5.2]{B1} (Proposition~\ref{dekdv}). The result of this paper is one of the applications of the results in~\cite{AB}. In \cite{M2}, an extention of the sine-Gordon equation is given and a solution is constructed in terms of the al-function on the subvariety in the hyperelliptic Jacobian. The results of this paper can be regarded as an analog of the results of~\cite{M2} for the KdV-hierarchy. In Section~\ref{section8}, we consider the rational case $(y_4,\dots,y_{14})=(0,\dots,0)$ and derive a rational solution of the KdV-hierarchy. This solution is equal to the solution obtained by the rational limit of the hyperelliptic functions of genus~2. This result would give an insight into the degeneration of the sigma functions.

In Section~8 of~\cite{AB}, we derived a solution of the dynamical systems introduced in \cite{BM} in the rational case $(y_4,\dots,y_{14})=(0,\dots,0)$ for $g=3$. Unfortunately, there are miscalculations. In Appendix~\ref{appendixA}, we correct the errors.

\section{The sigma function}

For a positive integer $g$, we set
\begin{gather*}\Delta_g=\big\{(y_4,y_6,\dots,y_{4g+2})\in\mathbb{C}^{2g}\,|\,\mbox{$Q_g(X)$ has a multiple root}\big\},\end{gather*}
where
\begin{gather*}Q_g(X)=X^{2g+1}+y_4X^{2g-1}-y_6X^{2g-2}+\cdots+y_{4g}X-y_{4g+2},\end{gather*}
and $B_g=\mathbb{C}^{2g}\backslash\Delta_g$. Consider a nonsingular hyperelliptic curve of genus $g$
\begin{gather*}
V_g=\big\{(X,Y)\in\mathbb{C}^2\,|\,Y^2=Q_g(X)\big\},
\end{gather*}
where $(y_4,y_6,\dots,y_{4g+2})\in B_g$. In this section we recall the definition of the sigma function for the curve~$V_g$ (see~\cite{B2}) and give facts about it which will be used later on. For $(X,Y)\in V_g$, let
\begin{gather*}
{\rm d}u_{2i-1}=-\frac{X^{g-i}}{2Y}{\rm d}X,\qquad 1\le i\le g,
\end{gather*}
be a basis of the vector space of holomorphic 1-forms on $V_g$, and let ${\rm d}u={}^t({\rm d}u_1,{\rm d}u_3,\dots,{\rm d}u_{2g-1})$. Further, let
\begin{gather}
{\rm d}r_{2i-1}=\frac{1}{2Y}\sum_{k=g-i+1}^{g+i-1}(-1)^{g+i-k}(k+i-g)y_{2g+2i-2k-2}X^k{\rm d}X,\qquad 1\le i\le g,\label{dr}
\end{gather}
be meromorphic one forms on $V_g$ with a pole only at $\infty$. In (\ref{dr}) we set $y_0=1$ and $y_2=0$. For example, for $g=2$
\begin{gather*}{\rm d}r_1=-\frac{X^2}{2Y}{\rm d}X,\qquad {\rm d}r_3=\frac{-y_4X-3X^3}{2Y}{\rm d}X\end{gather*}
and for $g=3$
\begin{gather*}
{\rm d}r_1=-\frac{X^3}{2Y}{\rm d}X,\qquad {\rm d}r_3=-\frac{y_4X^2+3X^4}{2Y}{\rm d}X,\\
{\rm d}r_5=-\frac{y_8X-2y_6X^2+3y_4X^3+5X^5}{2Y}{\rm d}X.
\end{gather*}
Let $\{\alpha_i,\beta_i\}_{i=1}^g$ be a canonical basis in the one-dimensional homology group of the curve $V_g$. We define the matrices of periods by
\begin{gather*}2\omega_1=\bigg(\int_{\alpha_j}{\rm d}u_{i}\bigg), \qquad 2\omega_2=\bigg(\int_{\beta_j}{\rm d}u_{i}\bigg),\qquad -2\eta_1=\bigg(\int_{\alpha_j}{\rm d}r_i\bigg), \qquad -2\eta_2=\bigg(\int_{\beta_j}{\rm d}r_i\bigg).\end{gather*}
The matrix of normalized periods has the form $\tau=\omega_1^{-1}\omega_2$. Let $\delta=\tau\delta'+\delta''$, $\delta',\delta''\in\mathbb{R}^g,$ be the vectors of Riemann's constants with respect to $(\{\alpha_i,\beta_i\},\infty)$ and $\delta:={}^t\big({}^t\delta',{}^t\delta''\big)$. Then we have $\delta'\!=\!{}^t\big(\frac{1}{2},\dots,\frac{1}{2}\big)$ and $\delta''\!=\!{}^t\big(\frac{g}{2},\frac{g-1}{2}\!,\dots,\frac{1}{2}\big)$. The sigma function $\sigma(w)$, \mbox{$w\!=\!{}^t(w_1,w_3,\dots,w_{2g-1})\!\in\!\mathbb{C}^g$}, is defined by
\begin{gather*}\sigma(w)=C\exp\left(\frac{1}{2}{}^tw\eta_1\omega_1^{-1}w\right)
\theta[\delta]\big((2\omega_1)^{-1}w,\tau\big),\end{gather*}
where $\theta[\delta](w)$ is the Riemann's theta function with characteristics $\delta$, which is defined by
\begin{gather*}\theta[\delta](w)=\sum_{n\in\mathbb{Z}^g}\exp\big\{\pi\sqrt{-1}\;{}^t(n+\delta')\tau(n+\delta')+2\pi\sqrt{-1}\;{}^t(n+\delta')(w+\delta'')\big\},\end{gather*}
and $C$ is a constant. We set $\wp_{i,j}(w)=-\partial_i\partial_j\log\sigma(w)$, $\sigma_i=\partial_i\sigma$, and $\sigma_{i,j}=\partial_i\partial_j\sigma$, where $\partial_i=\partial/\partial w_i$. We define the period lattice $\Lambda_g=\{2\omega_1m_1+2\omega_2m_2\,|\,m_1,m_2\in\mathbb{Z}^g\}$ and set $W=\{w\in\mathbb{C}^g\,|\,\sigma(w)=0\}$.

\begin{Proposition}[{\cite[Theorem 1.1]{B2} and \cite[p.~193]{N-10}}] \label{period} For $m_1,m_2\in\mathbb{Z}^g$, let $\Omega=2\omega_1m_1+2\omega_2m_2$, and let
\begin{gather*}
A=(-1)^{2({}^t\delta'm_1-{}^t\delta''m_2)+{}^tm_1m_2}\exp\big({}^t(2\eta_1m_1+ 2\eta_2m_2)(w+\omega_1m_1+\omega_2m_2)\big).
\end{gather*}
Then
\begin{enumerate}\itemsep=0pt
\item[$(i)$] $\sigma(w+\Omega)=A\sigma(w)$, where $w\in\mathbb{C}^g$,
\item[$(ii)$] $\sigma_i(w+\Omega)=A\sigma_i(w)$, $i=1,3,\dots,2g-1$, where $w\in W$.
\end{enumerate}
\end{Proposition}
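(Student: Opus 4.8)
The plan is to deduce part (i) from the classical transformation law of the Riemann theta function under translations by lattice vectors, and then to obtain part (ii) by differentiating the identity in (i) and restricting to the zero set $W$ of $\sigma$.

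For (i), set $u=(2\omega_1)^{-1}w$. Since $\tau=\omega_1^{-1}\omega_2$, the translation $w\mapsto w+\Omega$ with $\Omega=2\omega_1m_1+2\omega_2m_2$ induces $u\mapsto u+m_1+\tau m_2$. The theta function with characteristics $[\delta]$ obeys
\begin{gather*}
\theta[\delta](u+m_1+\tau m_2,\tau)=(-1)^{2({}^t\delta'm_1-{}^t\delta''m_2)}\exp\big({-}\pi\sqrt{-1}\,{}^tm_2\tau m_2-2\pi\sqrt{-1}\,{}^tm_2u\big)\theta[\delta](u,\tau).
\end{gather*}
Multiplying by the Gaussian prefactor $C\exp\big(\frac{1}{2}{}^tw\eta_1\omega_1^{-1}w\big)$ and expanding $\frac{1}{2}{}^t(w+\Omega)\eta_1\omega_1^{-1}(w+\Omega)$ (using the symmetry of $\eta_1\omega_1^{-1}$), the terms linear and quadratic in $\Omega$ combine with the exponent displayed above. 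The generalized Legendre relation among the period matrices $\omega_1,\omega_2,\eta_1,\eta_2$ is precisely what is needed to rewrite this combined exponent as ${}^t(2\eta_1m_1+2\eta_2m_2)(w+\omega_1m_1+\omega_2m_2)$ and to produce the sign $(-1)^{{}^tm_1m_2}$; together with the characteristic sign $(-1)^{2({}^t\delta'm_1-{}^t\delta''m_2)}$ already supplied by the theta transformation, this is exactly the factor $A$. Hence $\sigma(w+\Omega)=A\sigma(w)$ for all $w\in\mathbb{C}^g$, which is the statement recorded in \cite[Theorem~1.1]{B2} and \cite[p.~193]{N-10}.

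For (ii), differentiate the identity of (i) with respect to $w_i$. The factor $A$ depends on $w$ only through $\exp\big({}^t(2\eta_1m_1+2\eta_2m_2)w\big)$, so $\partial_iA=c_iA$ with $c_i:=(2\eta_1m_1+2\eta_2m_2)_i$ a constant, and therefore $\sigma_i(w+\Omega)=c_iA\sigma(w)+A\sigma_i(w)$ as an identity on $\mathbb{C}^g$. Restricting to $w\in W$ annihilates the first term, since $\sigma(w)=0$ there, leaving $\sigma_i(w+\Omega)=A\sigma_i(w)$. The statement is meaningful on $W$ because (i) shows that $W$ is invariant under $w\mapsto w+\Omega$ (as $A$ is nowhere zero, $\sigma(w)=0$ iff $\sigma(w+\Omega)=0$), so both $w$ and $w+\Omega$ lie in $W$. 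This gives (ii) for each $i\in\{1,3,\dots,2g-1\}$.

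The only delicate point is the bookkeeping in (i): checking that the exponent from the Gaussian prefactor together with the theta quasi-periodicity factor equals the exponent of $A$, which relies entirely on the Legendre relation for $(\omega_1,\omega_2,\eta_1,\eta_2)$ attached to the differentials ${\rm d}u_{2i-1}$ and ${\rm d}r_{2i-1}$. Once (i) is established, (ii) is an immediate one-line consequence and presents no further obstacle.
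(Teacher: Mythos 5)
The paper does not prove this proposition at all: it is quoted verbatim from the literature (Theorem~1.1 of Buchstaber--Enolskii--Leykin and p.~193 of Nakayashiki), so there is no in-paper argument to compare yours against. Your proposal is the standard derivation and its structure is sound: part (i) follows from the quasi-periodicity of $\theta[\delta]$ under $u\mapsto u+m_1+\tau m_2$ combined with the Gaussian prefactor $\exp\big(\frac{1}{2}{}^tw\eta_1\omega_1^{-1}w\big)$, with the generalized Legendre relation converting the resulting exponent into ${}^t(2\eta_1m_1+2\eta_2m_2)(w+\omega_1m_1+\omega_2m_2)$ and supplying the extra sign $(-1)^{{}^tm_1m_2}$; and part (ii) is, as you say, a one-line consequence of differentiating (i) and using $\sigma(w)=0$ on $W$, together with the observation that $A\neq0$ makes $W$ translation-invariant. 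The one place you stop short is the explicit bookkeeping in (i): you assert, but do not verify, that the quadratic and linear terms in $\Omega$ from the prefactor, together with $-\pi\sqrt{-1}\,{}^tm_2\tau m_2-2\pi\sqrt{-1}\,{}^tm_2u$ from the theta law, reorganize into the exponent of $A$ and the sign $(-1)^{{}^tm_1m_2}$ via the Legendre relation. That computation (including the precise normalization of the Legendre relation for the specific ${\rm d}u_{2i-1}$, ${\rm d}r_{2i-1}$ used here) is exactly the content of the cited classical theorem, so your argument is acceptable as a sketch but would need that calculation written out to be self-contained. Part (ii), which is the only portion with any content specific to this paper's setting (the restriction to $w\in W$), is complete and correct as you state it.
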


Proposition \ref{period}(i) implies that $w+\Omega\in W$ for any $w\in W$ and $\Omega\in\Lambda_g$. The surface
\begin{gather*}
(\sigma):=\big\{w\in\mathbb{C}^g/\Lambda_g\,|\,\sigma(w)=0\big\}
\end{gather*}
is called the sigma divisor. We set $\deg w_{2k-1}=-(2k-1)$ and $\deg y_{2i}=2i$, where $1\le k\le g$, $2\le i\le 2g+1$. Let $S_{\mu_g}(w)$ be the Schur function associated with the partition $\mu_g=(g,g-1,\dots,1)$ and set $|\mu_g|=g+(g-1)+\cdots+1$ (see \cite[Section~4]{N-10}).

\begin{Theorem}[{\cite[Theorem~6.3]{BEL-99-R}, \cite[Theorem~7.7]{B2}, \cite{BEL-18}, \cite[Theorem 3]{N-10}}] \label{rationallim} The sigma func\-tion~$\sigma(w)$ is an entire function on $\mathbb{C}^g$, and it is given by the series
\begin{gather*}
\sigma(w)=S_{\mu_g}(w)+\sum_{i_1+3i_3+\cdots+(2g-1)i_{2g-1}>|\mu_g|}\lambda_{i_1,i_3,\dots,i_{2g-1}}w_1^{i_1}w_3^{i_3}\cdots w_{2g-1}^{i_{2g-1}},
\end{gather*}
where the coefficients $\lambda_{i_1,i_3,\dots,i_{2g-1}}\in\mathbb{Q}[y_4,y_6,\dots,y_{4g+2}]$ are homogeneous polynomials of degree $i_1+3i_3+\cdots+(2g-1)i_{2g-1}-|\mu_g|$ if $\lambda_{i_1,i_3,\dots,i_{2g-1}}\neq0$.
\end{Theorem}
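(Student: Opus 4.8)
The plan is to derive the series expansion from three inputs: the analyticity of the theta series, the $\mathbb{C}^{*}$-scaling symmetry of the hyperelliptic family, and a computation of the ``rational limit'' $y\to 0$. First I would record entirety: by Riemann's bilinear relations $2\omega_{1}$ is invertible and $\tau=\omega_{1}^{-1}\omega_{2}$ lies in the Siegel upper half-space, so the series defining $\theta[\delta]\big((2\omega_{1})^{-1}w,\tau\big)$ converges absolutely and locally uniformly in $w\in\mathbb{C}^{g}$; multiplying by the entire, nowhere-vanishing factor $\exp\big(\tfrac12{}^{t}w\eta_{1}\omega_{1}^{-1}w\big)$ shows $\sigma$ is entire on $\mathbb{C}^{g}$. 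Over $B_{g}$ the period matrices $\omega_{k},\eta_{k}$ depend holomorphically on $(y_{4},\dots,y_{4g+2})$ (the canonical homology basis can be chosen to vary continuously), so $\sigma$ is holomorphic on $\mathbb{C}^{g}\times B_{g}$ and has a Taylor expansion $\sigma(w)=\sum_{\alpha}c_{\alpha}(y)\,w^{\alpha}$ with each $c_{\alpha}$ holomorphic on $B_{g}$.

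Next I would exploit the scaling action $(X,Y)\mapsto(\zeta^{2}X,\zeta^{2g+1}Y)$, which maps $V_{g}$ with parameters $(y_{2i})$ isomorphically onto $V_{g}$ with parameters $(\zeta^{2i}y_{2i})$ and pulls the forms of the excerpt back as $\mathrm{d}u_{2i-1}\mapsto\zeta^{-(2i-1)}\mathrm{d}u_{2i-1}$, $\mathrm{d}r_{2i-1}\mapsto\zeta^{2i-1}\mathrm{d}r_{2i-1}$. Setting $D=\mathrm{diag}\big(\zeta,\zeta^{3},\dots,\zeta^{2g-1}\big)$, this gives $2\omega_{k}\mapsto D^{-1}(2\omega_{k})$ and $2\eta_{k}\mapsto D(2\eta_{k})$, hence $\tau$ is invariant, and both $(2\omega_{1})^{-1}w$ and $\tfrac12{}^{t}w\eta_{1}\omega_{1}^{-1}w$ are invariant under the simultaneous substitution $w\mapsto D^{-1}w$. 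Since the standard normalizing constant $C=\mathrm{const}\cdot\mathrm{disc}(Q_{g})^{-1/8}\det(2\omega_{1})^{-1/2}$ scales by $\zeta^{-|\mu_{g}|}$ (because $\mathrm{disc}(Q_{g})$ has weight $4g(2g+1)$ and $\det(2\omega_{1})$ has weight $-g^{2}$, and $-\tfrac18\cdot 4g(2g+1)+\tfrac12 g^{2}=-|\mu_{g}|$), one obtains
\begin{gather*}
\sigma\big(\zeta^{-1}w_{1},\zeta^{-3}w_{3},\dots,\zeta^{-(2g-1)}w_{2g-1};\ \zeta^{2}y_{4},\dots,\zeta^{2}y_{4g+2}\big)=\zeta^{-|\mu_{g}|}\,\sigma(w;y),
\end{gather*}
i.e.\ $\sigma$ is homogeneous of weight $-|\mu_{g}|$ for $\deg w_{2k-1}=-(2k-1)$, $\deg y_{2i}=2i$. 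Comparing Taylor coefficients, each $c_{\alpha}(y)$ is homogeneous of degree $\sum_{k}(2k-1)\alpha_{2k-1}-|\mu_{g}|$.

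Finally I would promote ``holomorphic and homogeneous'' to ``polynomial'' and identify the lowest-order term, realizing the normalized $\sigma$ as the KP tau function of the Krichever datum $(V_{g},\infty,z)$ with $X=z^{-2}$. Under the identification of $w_{2k-1}$ with the surviving (odd) KdV times, the Sato expansion $\tau=\sum_{\lambda}\pi_{\lambda}S_{\lambda}$ exhibits $\sigma$ as a convergent sum of Schur functions with Plücker coordinates $\pi_{\lambda}=\pi_{\lambda}(y)$ that are built from Taylor coefficients of algebraic data on the curve, hence polynomials with rational coefficients; the Weierstrass gap sequence $1,3,\dots,2g-1$ at the branch point $\infty$ makes $\mu_{g}=(g,g-1,\dots,1)$ the minimal partition with $\pi_{\lambda}\neq0$, with $\pi_{\mu_{g}}=1$ in this normalization. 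Collecting terms by weight then yields exactly the claimed series, with $\lambda_{i_{1},\dots,i_{2g-1}}\in\mathbb{Q}[y_{4},\dots,y_{4g+2}]$ homogeneous of degree $i_{1}+3i_{3}+\dots+(2g-1)i_{2g-1}-|\mu_{g}|$ and leading term $S_{\mu_{g}}(w)$. An alternative to the Sato input is the known heat-type equations $\partial_{y_{2i}}\sigma=H_{i}\sigma$ with $H_{i}$ second order in $\partial_{w}$ and polynomial in $(y,w)$: expanding in $y$-degree turns them into a recursion over $\mathbb{Q}$ for the homogeneous parts of $\sigma$, which (with a majorant estimate for convergence) gives $\sigma=\sum_{n}\sigma_{n}$ with $\sigma_{n}\in\mathbb{Q}[y,w]$ once the seed $\sigma_{0}=\sigma|_{y=0}$ is known. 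Either way, \emph{the main obstacle is the identification of the leading term} $\sigma|_{y=0}=S_{\mu_{g}}(w)$ — equivalently, controlling the degeneration of $\sigma$ down to the most singular fibre $y=0$; the entirety, the grading, and the polynomiality of the remaining coefficients are comparatively formal once this is in hand.
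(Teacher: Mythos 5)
The paper offers no proof of this statement: it is imported verbatim from the cited references (Buchstaber--Enolskii--Leykin and Nakayashiki), so there is no in-paper argument to compare yours against. Your outline does follow the two standard routes found in exactly those references --- the Sato/tau-function expansion in Schur functions is essentially Nakayashiki's proof, and the rational-limit/heat-equation recursion is the Buchstaber--Enolskii--Leykin proof --- and your bookkeeping is correct: the entirety from the theta series, the weights of $\omega_k$, $\eta_k$, the invariance of $\tau$, and the weight $-|\mu_g|$ of the normalizing constant all check out against the grading $\deg w_{2k-1}=-(2k-1)$, $\deg y_{2i}=2i$.

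As a proof, however, the proposal has genuine gaps, and they sit exactly at the two places you yourself flag. First, homogeneity plus holomorphy on $B_g=\mathbb{C}^{2g}\setminus\Delta_g$ does not give polynomiality: a weight-homogeneous coefficient $c_\alpha(y)$ could perfectly well have poles along the discriminant, so the entire burden of the claim $\lambda_{i_1,\dots,i_{2g-1}}\in\mathbb{Q}[y_4,\dots,y_{4g+2}]$ rests on the Sato expansion (where one must actually verify that the Pl\"ucker coordinates of the point of the Grassmannian attached to $(V_g,\infty,z)$ with $X=z^{-2}$ lie in $\mathbb{Q}[y]$, and that the exponential prefactor relating $\tau$ to $\sigma$ is the correct one) or on the heat equations (whose existence, with coefficients polynomial in $(y,w)$, is itself a nontrivial theorem). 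You name these inputs but execute neither. Second, the identification of the leading term, $\sigma|_{y=0}=S_{\mu_g}(w)$ with coefficient exactly $1$, is the Schur--Weierstrass theorem of \cite{BEL-99-R}; note that $y=0$ lies in $\Delta_g$, so this is not an evaluation but a degeneration whose control is the heart of the matter, and it is also where the constant $C$ --- left unspecified in the paper's definition of $\sigma$ --- must be pinned down, since without your normalization of $C$ the leading coefficient is only determined up to scale. So the proposal is a faithful road map of the known proofs rather than a self-contained argument; given that the paper itself only cites the result this is reasonable, but the polynomiality of the coefficients and the computation of the $y=0$ fibre should be attributed as external inputs rather than presented as consequences of the grading.
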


\begin{Example}[{\cite[Example 4.5]{BEL-99-R}, \cite{BEL-18}, \cite[p.~192]{N-10}}]
\begin{gather*}S_{(2,1)}(w)=-w_3+\frac{1}{3}w_1^3,\qquad S_{(3,2,1)}(w)=w_1w_5-w_3^2-\frac{1}{3}w_1^3w_3+\frac{1}{45}w_1^6.\end{gather*}
\end{Example}

\section{Rational functions on the symmetric square}\label{section3}
In \cite{AB}, for $g=3$, the structure of the field of rational functions on the symmetric square of the curve $V_3$ is described explicitly. These results can be extended for any genus similarly. In this section we describe the structure of the field of rational functions on the symmetric square of the curve $V_g$.

Let $\mathcal{F}\big(V_g^2\big)$ be the field of rational functions on $V_g^2$ and let $J_g$ be the ideal in $\mathbb{C}[X_1,Y_1,X_2,Y_2]$ generated by the polynomials $Y_1^2-Q_g(X_1)$ and $Y_2^2-Q_g(X_2)$. We denote the quotient field of an integral domain $R$ by $\langle R\rangle$. We have
\begin{gather*}
\mathcal{F}\big(V_g^2\big)=\langle\mathbb{C}[X_1,Y_1,X_2,Y_2]/J_g\rangle.
\end{gather*}
Let ${\rm Sym}^2\big(\mathbb{C}^2\big)$ be the symmetric square of $\mathbb{C}^2$ and let $\mathcal{F}\big({\rm Sym}^2\big(\mathbb{C}^2\big)\big)$ be the set of rational functions $f(X_1,Y_1,X_2,Y_2)\in\mathbb{C}(X_1,Y_1,X_2,Y_2)$ such that $f(X_1,Y_1,X_2,Y_2)=f(X_2,Y_2,X_1,Y_1)$. Let ${\rm Sym}^2(V_g)$ be the symmetric square of the curve $V_g$ and let $\mathcal{F}\big({\rm Sym}^2(V_g)\big)$ be the set of elements $h\in\mathcal{F}\big(V_g^2\big)$ such that there exists a representative $\widetilde{h}\in \mathcal{F}\big({\rm Sym}^2\big(\mathbb{C}^2\big)\big)$ of~$h$. In~\cite{AB, BM}, the following elements of $\mathcal{F}\big({\rm Sym}^2\big(\mathbb{C}^2\big)\big)$ are used
\begin{gather*}
a=\frac{X_1+X_2}{2},\quad b=\frac{(X_1-X_2)^2}{4},\qquad c=\frac{Y_1-Y_2}{X_1-X_2},\qquad d=\frac{Y_1+Y_2}{2}.
\end{gather*}
Note that the elements $a$, $b$, $c$, and $d$ are algebraically independent and generate the field \linebreak $\mathcal{F}\big({\rm Sym}^2\big(\mathbb{C}^2\big)\big)$ over $\mathbb{C}$, i.e., $\mathcal{F}\big({\rm Sym}^2\big(\mathbb{C}^2\big)\big)=\mathbb{C}(a,b,c,d)$. We set
\begin{gather*}
M_g=\frac{Y_1^2-Q_g(X_1)-Y_2^2+Q_g(X_2)}{X_1-X_2},\qquad N_g=Y_1^2-Q_g(X_1)+Y_2^2-Q_g(X_2),
\end{gather*}
and $\widetilde{N}_g=-N_g/2+aM_g$. For example, for $g=2$, we obtain
\begin{gather*}
\begin{split}& M_2(a,b,c,d)=-5a^4-10a^2b-b^2+2cd-y_4\big(3a^2+b\big)+2y_6a-y_8,\\
& \widetilde{N}_2(a,b,c,d)=-4a^5+4ab^2-c^2b+2acd-d^2+2y_4\big({-}a^3+ab\big)+y_6\big(a^2-b\big)-y_{10},\end{split}
\end{gather*}
and for $g=3$, we obtain\footnote{In \cite[p.~165]{AB}, the expression of $H_{14}$ is that of $-H_{14}/2+u_2H_{12}$ in the notation of \cite{AB}.}
\begin{gather*}
M_3(a,b,c,d)=2cd-7a^6-35a^4b-21a^2b^2-b^3-y_4\big(5a^4+10a^2b+b^2\big)\\
\hphantom{M_3(a,b,c,d)=}{}+4y_6\big(a^3+ab\big)-y_8\big(3a^2+b\big)+2y_{10}a-y_{12},\\
\widetilde{N}_3(a,b,c,d)=-d^2-bc^2+2acd-6a^7-14a^5b+14a^3b^2+6ab^3\\
\hphantom{\widetilde{N}_3(a,b,c,d)=}{}-4y_4\big(a^5-ab^2\big)+y_6\big(3a^4-2a^2b-b^2\big)-2y_8\big(a^3-ab\big)+y_{10}\big(a^2-b\big)-y_{14}.
\end{gather*}
Let $A_g$ be the ideal generated by the polynomials $M_g$ and $N_g$ in the ring $\mathbb{C}[a,b,c,d]$ and let $u_i$, $i=2,4,2g-1,2g+1$, denote the elements of $\mathcal{F}\big(V_g^2\big)$ such that $u_2=a$, $u_4=b$, $u_{2g-1}=c$, and $u_{2g+1}=d$ in the field $\mathbb{C}(X_1,Y_1,X_2,Y_2)$, i.e., $u_2$, $u_4$, $u_{2g-1}$, and $u_{2g+1}$ are the equivalence classes of $a$, $b$, $c$, and $d$ in $\mathcal{F}\big(V_g^2\big)$, respectively.
Note that $u_2$, $u_4$, $u_{2g-1}$, and $u_{2g+1}$ are contained in $\mathcal{F}\big({\rm Sym}^2(V_g)\big)$. Consider the homomorphism
\begin{gather*}\Gamma_g \colon \ \mathbb{C}[a,b,c,d]\to\mathcal{F}\big({\rm Sym}^2(V_g)\big),\qquad a\mapsto u_2,\qquad b\mapsto u_4,\qquad c\mapsto u_{2g-1},\qquad d\mapsto u_{2g+1}.\end{gather*}
Then we have $\mbox{Ker}(\Gamma_g)=A_g$ and the isomorphism \cite[Lemma~3.3 and Theorem~3.4]{AB}
\begin{gather*}
\widetilde{\Gamma}_g\colon \ \langle\mathbb{C}[a,b,c,d]/A_g\rangle\to\mathcal{F}\big({\rm Sym}^2(V_g)\big).
\end{gather*}
The following two {\it commuting} derivations acting on the field $\mathcal{F}\big(V_g^2\big)$ were used in \cite{AB, BM}:
\begin{gather*}
\mathcal{L}_{2g-3}^{(g)}=\frac{1}{X_1-X_2}(\mathcal{D}_2-\mathcal{D}_1),\qquad \mathcal{L}_{2g-1}^{(g)}=\frac{1}{X_1-X_2}(X_2\mathcal{D}_1-X_1\mathcal{D}_2),
\end{gather*}
where
\begin{gather*}
\mathcal{D}_k=2Y_k\partial_{X_k}+Q_g'(X_k)\partial_{Y_k},\qquad k=1,2.
\end{gather*}
In \cite[Lemmas 16 and 17]{BM}, $\mathcal{L}_i^{(g)}u_j$ is expressed as a polynomial of $u_2$, $u_4$, $u_{2g-1}$, and $u_{2g+1}$ whose coefficients are in $\mathbb{Q}[y_4,y_6,\dots,y_{4g+2}]$ for any $i=2g-3,2g-1$ and $j=2,4,2g-1,2g+1$. These can be regarded as polynomial dynamical systems in $\mathbb{C}^4$ with coordinates $u_2$, $u_4$, $u_{2g-1}$, and~$u_{2g+1}$. We assume $g=3$.

\begin{Theorem}[{\cite[Lemmas 16 and 17]{BM}}]\label{BMsystem} In the space $\mathbb{C}^4$ with coordinates $u_2$, $u_4$, $u_5$, and~$u_7$, we have the following families of dynamical systems with constant parameters $y_4$, $y_6$, $y_8$, and~$y_{10}$:
\begin{alignat*}{3}
& (I) \quad && \mathcal{L}_3^{(3)}u_2=-u_5,\qquad \mathcal{L}_3^{(3)}u_4=-2u_7,& \\
&&& \mathcal{L}_3^{(3)}u_5=-35u_2^4-42u_2^2u_4-3u_4^2-2y_4(5u_2^2+u_4)+4y_6u_2-y_8,&\\
&&& \mathcal{L}_3^{(3)}u_7=-7\big(3u_2^5+10u_2^3u_4+3u_2u_4^2\big)-10y_4\big(u_2^3+u_2u_4\big)& \\
&&& \hphantom{\mathcal{L}_3^{(3)}u_7=}{} +2y_6\big(3u_2^2+u_4\big)-3y_8u_2+y_{10},\\
&(II) \quad && \mathcal{L}_5^{(3)}u_2=u_2u_5-u_7,\qquad \mathcal{L}_5^{(3)}u_4=2(u_2u_7-u_4u_5),& \\
&&& \mathcal{L}_5^{(3)}u_5=u_5^2+14u_2^5-28u_2^3u_4-18u_2u_4^2-8y_4u_2u_4+2y_6\big(u_2^2+u_4\big)-2y_8u_2+y_{10},&\\
&&& \mathcal{L}_5^{(3)}u_7=-u_5u_7+21u_2^6+35u_2^4u_4-21u_2^2u_4^2-3u_4^3+2y_4\big(5u_2^4-u_4^2\big)& \\
&&& \hphantom{\mathcal{L}_5^{(3)}u_7=}{} -2y_6\big(3u_2^3-u_2u_4\big)+y_8\big(3u_2^2-u_4\big)-y_{10}u_2.&
\end{alignat*}
\end{Theorem}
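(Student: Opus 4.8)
The statement to establish is Theorem~\ref{BMsystem}, the explicit polynomial form of the dynamical systems $(I)$ and $(II)$ obtained by applying the commuting derivations $\mathcal{L}_3^{(3)}$ and $\mathcal{L}_5^{(3)}$ to the coordinate functions $u_2,u_4,u_5,u_7$. The plan is to compute directly in the field $\mathcal{F}\big(V_3^2\big)$ using the chain rule for $\mathcal{D}_1,\mathcal{D}_2$, and then reduce the resulting rational expressions modulo the ideal $A_3$ generated by $M_3$ and $N_3$ so that everything is written as a polynomial in $u_2,u_4,u_5,u_7$ with coefficients in $\mathbb{Q}[y_4,y_6,y_8,y_{10}]$. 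Since the $\mathcal{L}_i^{(3)}$ are derivations, it suffices to apply them to the generators $a,b,c,d$ of $\mathcal{F}\big(\mathrm{Sym}^2(\mathbb{C}^2)\big)$ and then pass to equivalence classes.

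First I would record the action of $\mathcal{D}_k = 2Y_k\partial_{X_k} + Q_3'(X_k)\partial_{Y_k}$ on $X_k$ and $Y_k$: namely $\mathcal{D}_k X_k = 2Y_k$ and $\mathcal{D}_k Y_k = Q_3'(X_k)$, while $\mathcal{D}_k$ kills $X_j,Y_j$ for $j\neq k$. From this, using $a=\tfrac{X_1+X_2}{2}$, $b=\tfrac{(X_1-X_2)^2}{4}$, $c=\tfrac{Y_1-Y_2}{X_1-X_2}$, $d=\tfrac{Y_1+Y_2}{2}$, one computes $\mathcal{D}_1 a, \mathcal{D}_2 a$, etc., and then assembles $\mathcal{L}_1^{(3)} = \tfrac{1}{X_1-X_2}(\mathcal{D}_2-\mathcal{D}_1)$ and $\mathcal{L}_3^{(3)} = \tfrac{1}{X_1-X_2}(X_2\mathcal{D}_1 - X_1\mathcal{D}_2)$. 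The derivatives of $a$ and $b$ come out immediately as low-degree expressions in $a,b,c,d$ (e.g.\ $\mathcal{L}_3^{(3)} a = -c$ and $\mathcal{L}_3^{(3)} b = -2d$, matching $\mathcal{L}_3^{(3)}u_2=-u_5$, $\mathcal{L}_3^{(3)}u_4=-2u_7$ after passing to classes). For $\mathcal{L}_i^{(3)} c$ and $\mathcal{L}_i^{(3)} d$ one first gets expressions involving $Q_3'(X_1), Q_3'(X_2)$ and $Y_1^2, Y_2^2$; these are symmetrized, and then $Y_j^2$ is replaced by $Q_3(X_j)$ (this is exactly the passage to $\mathcal{F}\big(V_3^2\big)$), yielding symmetric polynomials in $X_1,X_2$ which I then rewrite in terms of $a$ and $b$ via $X_1+X_2 = 2a$, $X_1 X_2 = a^2 - b$, and in terms of $c,d$ where the odd part in the $Y$'s appears.

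The key reduction step is to recognize $M_3$ and $\widetilde{N}_3$ (hence $N_3 = -2\widetilde{N}_3 + 2aM_3$) in the intermediate output and use $M_3\equiv 0$, $N_3\equiv 0$ in $\langle\mathbb{C}[a,b,c,d]/A_3\rangle$. Concretely, combinations such as $2cd - (\text{polynomial in }a,b)$ arising from $\mathcal{L}_3^{(3)}c$ will be simplified by substituting the relation $2cd \equiv 7a^6+35a^4b+21a^2b^2+b^3 + y_4(5a^4+10a^2b+b^2) - 4y_6(a^3+ab) + y_8(3a^2+b) - 2y_{10}a + y_{12}$ coming from $M_3\equiv 0$; similarly $d^2 + bc^2 - 2acd$ is eliminated using $\widetilde{N}_3\equiv 0$. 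After these substitutions the right-hand sides become the stated polynomials. I would present the computation for $\mathcal{L}_3^{(3)} u_5$ and $\mathcal{L}_3^{(3)} u_7$ in system $(I)$, and $\mathcal{L}_5^{(3)} u_2, \mathcal{L}_5^{(3)} u_4, \mathcal{L}_5^{(3)} u_5, \mathcal{L}_5^{(3)} u_7$ in system $(II)$, in each case exhibiting the raw derivative and then the reduced form, invoking $\widetilde{\Gamma}_3$ to justify that the identity in $\langle\mathbb{C}[a,b,c,d]/A_3\rangle$ transports to $\mathcal{F}\big(\mathrm{Sym}^2(V_3)\big)$.

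The main obstacle is bookkeeping rather than any conceptual difficulty: the intermediate expressions for $\mathcal{L}_5^{(3)}u_7$ involve degree-$6$ symmetric polynomials in $X_1,X_2$ together with the mixed terms in $c,d$, and one must carry the $y_4,\dots,y_{12},y_{14}$ contributions from $Q_3$ and $Q_3'$ correctly through the symmetrization and the $Y_j^2\mapsto Q_3(X_j)$ substitution before the $M_3, \widetilde{N}_3$ reductions collapse things down to coefficients only in $y_4,\dots,y_{10}$. (That $y_{12}, y_{14}$ drop out is forced, since $\mathcal{L}_i^{(3)}$ raises degree by $i$ and the left sides have degrees $-(2g-1)-i$ and $-(2g+1)-i$, but verifying it is where the arithmetic must be done carefully; indeed the footnote about the sign/normalization of $H_{14}$ in \cite{AB} signals precisely this kind of trap.) Given Theorem~\ref{BMsystem} is quoted from \cite[Lemmas 16 and 17]{BM}, I would in fact organize the proof as a verification: differentiate, symmetrize, reduce mod $A_3$, and check equality term by term with the displayed formulas.
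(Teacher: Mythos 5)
The paper does not prove this statement at all: Theorem~\ref{BMsystem} is imported verbatim from \cite[Lemmas~16 and~17]{BM}, and the text around it only records the first integrals and Hamiltonian structure. So your proposal supplies a proof where the paper supplies a citation, and the verification route you describe is the correct and essentially only one: compute $\mathcal{D}_k$ on $X_k,Y_k$, assemble $\mathcal{L}_3^{(3)}$ and $\mathcal{L}_5^{(3)}$ on the generators $a,b,c,d$, substitute the curve relations (equivalently, reduce modulo the ideal $A_3=(M_3,N_3)$), and match the resulting symmetric polynomials against the displayed right-hand sides via $X_1+X_2=2a$, $X_1X_2=a^2-b$; your spot checks $\mathcal{L}_3^{(3)}a=-c$, $\mathcal{L}_3^{(3)}b=-2d$ are correct, and the mechanism you identify for the disappearance of $y_{12}$ (cancellation between the $2cd$ reduction and the $Q_3'$ terms, e.g.\ $\mathcal{L}_3^{(3)}c=\tfrac{1}{2b}\bigl(2cd-\tfrac{1}{2}(Q_3'(X_1)+Q_3'(X_2))\bigr)$, whose numerator is divisible by $4b$) is exactly right. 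Two small corrections. First, you momentarily attach the wrong subscripts to the operators: for $g=3$ the paper's definitions give $\mathcal{L}_{3}^{(3)}=\tfrac{1}{X_1-X_2}(\mathcal{D}_2-\mathcal{D}_1)$ and $\mathcal{L}_{5}^{(3)}=\tfrac{1}{X_1-X_2}(X_2\mathcal{D}_1-X_1\mathcal{D}_2)$, whereas you wrote $\mathcal{L}_1^{(3)}$ and $\mathcal{L}_3^{(3)}$ (the $g=2$ indices); your subsequent formulas use the correct operators, so this is only a labelling slip, but it should be fixed. Second, your parenthetical claim that the absence of $y_{12},y_{14}$ is \emph{forced} by degree counting is not quite true for every entry: $\mathcal{L}_5^{(3)}u_7$ is homogeneous of degree $12$, so a constant multiple of $y_{12}$ is degree-admissible there and its absence really does have to come out of the arithmetic, as you yourself concede in the next clause. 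Neither point is a gap in the method; carrying out the computations as you describe does establish the theorem.
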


The systems (I) and (II) have common first integrals $H_{12}:=M_3(u_2,u_4,u_5,u_7)+y_{12}$ and $H_{14}:=\widetilde{N}_3(u_2,u_4,u_5,u_7)+y_{14}$ \cite{BM,BM2}, \cite[Theorem 7.1]{AB}. Moreover, the system (I) is a Hamiltonian system with the Hamiltonian $H_{12}$ and the Poisson structure determined by $\{u_2,u_7\}=-1/2$, $\{u_4,u_5\}=-1$, $\{u_2,u_4\}=\{u_2,u_5\}=\{u_4,u_7\}=\{u_5,u_7\}=0$~\cite{BM2}. The system (II) is a Hamiltonian system with the Hamiltonian $H_{14}$ and the Poisson structure determined by $\{u_2,u_7\}=1/2$, $\{u_4,u_5\}=1$, $\{u_2,u_4\}=\{u_2,u_5\}=\{u_4,u_7\}=\{u_5,u_7\}=0$~\cite{BM2}. These Hamiltonians are in involution with respect to the Poisson structures and the systems are Liouville integrable~\cite{BM2}.

\section{Meromorphic functions on the sigma divisor}\label{section4}

In \cite{AB}, for $g=3$, the field of meromorphic functions on the sigma divisor of the curve $V_3$ is described. In this section we recall these results.

We assume $g=3$. Fix any constant vector $(y_4,y_6,y_8,y_{10},y_{12},y_{14})\in B_3$. Let $\mathcal{F}$ be the field of all meromorphic functions on $\mathbb{C}^3$ and let $\mathcal{F}[(\sigma)]$ be the set of meromorphic functions $f\in\mathcal{F}$ satisfying the following two conditions:
\begin{itemize}\itemsep=0pt
\item for any point $w\in W$, there exist an open neighborhood $U_1\subset\mathbb{C}^3$ of this point and two holomorphic functions $g$ and $h$ on $U_1$ such that the function~$h$ does not identically vanish on $U_1\cap W$ and $f=g/h$ on $U_1$;
\item $f(w+\Omega)=f(w)$ for any $w\in W$ and $\Omega\in\Lambda_3$.
\end{itemize}
Note that $\mathcal{F}[(\sigma)]$ is a subring in $\mathcal{F}$, but it is not generally a field. Let us consider the Abel--Jacobi map
\begin{gather*}
I_3\colon \ {\rm Sym}^2(V_3)\to{\rm Jac}(V_3)=\mathbb{C}^3/\Lambda_3,\qquad (P_1,P_2)\mapsto\int_{\infty}^{P_1}{\rm d}u+\int_{\infty}^{P_2}{\rm d}u.
\end{gather*}
The Abel--Jacobi map $I_3$ induces a ring homomorphism
\begin{gather*}
I_3^* \colon \ \mathcal{F}[(\sigma)]\to\mathcal{F}\big({\rm Sym}^2(V_3)\big),\qquad f\mapsto f\circ I_3.
\end{gather*}
Let $J^*$ be the set of meromorphic functions $f\in\mathcal{F}[(\sigma)]$ identically vanishing on $W$. Thus, we have $\operatorname{Ker} I_3^*=J^*$. We set $\mathcal{F}((\sigma))=\mathcal{F}[(\sigma)]/J^*$. Then $\mathcal{F}((\sigma))$ is a field and, by construction, there is an isomorphism of fields (see \cite[Section~4]{AB})
\begin{gather*}\overline{I_3^*} \colon \ \mathcal{F}((\sigma)) \to \mathcal{F}\big({\rm Sym}^2(V_3)\big).\end{gather*}
The following meromorphic functions on $\mathbb{C}^3$ are introduced in \cite{AB}:
\begin{gather*}
f_1=\frac{\sigma_{1,1}}{\sigma_1},\qquad f_2=\frac{\sigma_3}{\sigma_1},\qquad
f_3=\frac{\sigma_{1,3}}{\sigma_1},\qquad f_4=\frac{\sigma_5}{\sigma_1},\\
f_5=\frac{\sigma_{3,3}}{\sigma_1},\qquad g_5=\frac{\sigma_{1,5}}{\sigma_1},\qquad f_7=\frac{\sigma_{3,5}}{\sigma_1},\\
F_2=-\frac{1}{2}f_2,\qquad F_4=\frac{1}{4}f_2^2-f_4,\qquad F_5=\frac{1}{2}(f_1f_2^2+f_5-2f_2f_3),\\
F_7=\frac{1}{4}(2f_2^2f_3-2f_3f_4-f_1f_2^3+2f_1f_2f_4-f_2f_5+2f_7-2f_2g_5).
\end{gather*}
We have $F_i\in\mathcal{F}[(\sigma)]$ and $I_3^*(F_i)=u_i$ for $i=2,4,5,7$ (see \cite[Proposition~4.1]{AB}). In \cite{AB}, the following derivations acting on the field $\mathcal{F}((\sigma))$ are introduced
\begin{gather*}
L_3^{(3)}=\partial_3-\frac{\sigma_3}{\sigma_1}\partial_1,\qquad L_5^{(3)}=\partial_5-\frac{\sigma_5}{\sigma_1}\partial_1.
\end{gather*}
\begin{Lemma}[{\cite[Lemma 6.4]{AB}}]\label{16} The relations $\mathcal{L}^{(3)}_3\circ\overline{I_3^*}=\overline{I_3^*}\circ L_3^{(3)}$ and $\mathcal{L}^{(3)}_5\circ\overline{I_3^*}=\overline{I_3^*}\circ L_5^{(3)}$ hold.
\end{Lemma}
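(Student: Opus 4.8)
The plan is to recognize $\mathcal L_3^{(3)},\mathcal L_5^{(3)}$ and $L_3^{(3)},L_5^{(3)}$ as the derivations induced by vector fields on $V_3^2$ and on $\mathbb C^3$ respectively, and to show that the Abel--Jacobi map $I_3$ intertwines them; through the field isomorphism $\overline{I_3^*}$ this is precisely the assertion. First I would record how the locally defined Abelian integrals
\begin{gather*}
\varphi_{2j-1}:=w_{2j-1}\circ I_3=\int_\infty^{P_1}{\rm d}u_{2j-1}+\int_\infty^{P_2}{\rm d}u_{2j-1},\qquad j=1,2,3,
\end{gather*}
transform under $\mathcal D_k=2Y_k\partial_{X_k}+Q_3'(X_k)\partial_{Y_k}$: since ${\rm d}u_{2j-1}=-\tfrac{X^{3-j}}{2Y}{\rm d}X$ and $\langle{\rm d}X,\mathcal D_k\rangle=2Y_k$, one gets $\mathcal D_k\varphi_{2j-1}=\langle{\rm d}u_{2j-1},\mathcal D_k\rangle=-X_k^{3-j}$, and in particular these derivatives are single valued even though $\varphi_{2j-1}$ is not. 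Substituting into $\mathcal L_3^{(3)}=(X_1-X_2)^{-1}(\mathcal D_2-\mathcal D_1)$ and $\mathcal L_5^{(3)}=(X_1-X_2)^{-1}(X_2\mathcal D_1-X_1\mathcal D_2)$ and simplifying, I obtain $\big(\mathcal L_3^{(3)}\varphi_1,\mathcal L_3^{(3)}\varphi_3,\mathcal L_3^{(3)}\varphi_5\big)=(X_1+X_2,1,0)$ and $\big(\mathcal L_5^{(3)}\varphi_1,\mathcal L_5^{(3)}\varphi_3,\mathcal L_5^{(3)}\varphi_5\big)=(-X_1X_2,0,1)$.

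Next I would compute the right-hand side on the same coordinate functions. Directly from $L_3^{(3)}=\partial_3-\tfrac{\sigma_3}{\sigma_1}\partial_1$ and $L_5^{(3)}=\partial_5-\tfrac{\sigma_5}{\sigma_1}\partial_1$ one reads off $L_3^{(3)}w_1=-\sigma_3/\sigma_1$, $L_3^{(3)}w_3=1$, $L_3^{(3)}w_5=0$ and $L_5^{(3)}w_1=-\sigma_5/\sigma_1$, $L_5^{(3)}w_3=0$, $L_5^{(3)}w_5=1$. By Proposition~\ref{period}(ii) the ratios of first derivatives $\sigma_3/\sigma_1=f_2$ and $\sigma_5/\sigma_1=f_4$ of $\sigma$ are $\Lambda_3$-periodic on $W$, so $-\sigma_3/\sigma_1=2F_2$ and $-\sigma_5/\sigma_1=F_4-F_2^2$ (using $F_4=F_2^2-f_4$) lie in $\mathcal F[(\sigma)]$. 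Applying $I_3^*$ and using $I_3^*(F_i)=u_i$ together with $u_2=a=\tfrac12(X_1+X_2)$ and $u_4=b=\tfrac14(X_1-X_2)^2$ gives $I_3^*(2F_2)=X_1+X_2$ and $I_3^*(F_4-F_2^2)=b-a^2=-X_1X_2$. Comparing with the first paragraph yields, for $i\in\{3,5\}$ and $j\in\{1,2,3\}$,
\begin{gather*}
\mathcal L_i^{(3)}\varphi_{2j-1}=\big(L_i^{(3)}w_{2j-1}\big)\circ I_3.
\end{gather*}

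Finally I would promote these identities to all of $\mathcal F[(\sigma)]$ via the chain rule. At a generic point of $\operatorname{Sym}^2(V_3)$ choose a local lift $\widetilde I_3=(\varphi_1,\varphi_3,\varphi_5)$ of $I_3$; for $f\in\mathcal F[(\sigma)]$ one then has $f\circ I_3=f(\varphi_1,\varphi_3,\varphi_5)$, hence
\begin{gather*}
\mathcal L_i^{(3)}(f\circ I_3)=\sum_{j=1}^3\big((\partial_{2j-1}f)\circ\widetilde I_3\big)\,\mathcal L_i^{(3)}\varphi_{2j-1}=\sum_{j=1}^3\big((\partial_{2j-1}f)\circ\widetilde I_3\big)\big((L_i^{(3)}w_{2j-1})\circ I_3\big),
\end{gather*}
and the right-hand side equals $(L_i^{(3)}f)\circ I_3$ because $L_i^{(3)}f=\sum_j(L_i^{(3)}w_{2j-1})\,\partial_{2j-1}f$. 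Reducing modulo $J^*=\operatorname{Ker}I_3^*$ turns this into $\mathcal L_i^{(3)}\circ\overline{I_3^*}=\overline{I_3^*}\circ L_i^{(3)}$ for $i=3,5$. (Alternatively one could verify the relation only on the field generators $F_2,F_4,F_5,F_7$ of $\mathcal F((\sigma))$, but that would require differentiating ratios of $\sigma$-derivatives and simplifying by Baker-type relations, which is more laborious.) The delicate point I anticipate is precisely this bookkeeping: the coordinates $w_{2j-1}$ are not themselves in $\mathcal F[(\sigma)]$, so the statement must be routed through the single-valued derivatives of the Abelian integrals and the chain rule, and one has to check that each coefficient $L_i^{(3)}w_{2j-1}$ actually lands in $\mathcal F[(\sigma)]$; once that is done, the computation is short and free of period integrals.
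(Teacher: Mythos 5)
Your proposal is correct and follows essentially the same route as the paper: the paper itself only cites \cite[Lemma~6.4]{AB} for this statement, but its explicit proof of the genus-2 analogue in Section~6 is exactly your chain-rule computation through the Abelian integrals, using $\mathcal D_k\big(\int_\infty^{P_k}{\rm d}u_{2j-1}\big)=-X_k^{3-j}$ and matching the resulting coefficients with those of $L_i^{(3)}$ via $\sigma_3/\sigma_1\circ I_3=-(X_1+X_2)$ and $\sigma_5/\sigma_1\circ I_3=X_1X_2$. Your extra care in routing the argument through $\mathcal F[(\sigma)]/J^*$ and checking that the coefficients $-\sigma_3/\sigma_1$, $-\sigma_5/\sigma_1$ are periodic on $W$ is precisely the bookkeeping needed in the genus-3 case, and your computations check out.
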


\section{Two parametric deformation of KdV-hierarchy}

We assume $g=3$ and consider the following derivations:
\begin{gather*}T_1:=\partial_1-\frac{\sigma_1}{\sigma_5}\partial_5=-f_4^{-1}L_5^{(3)},\qquad T_3:=\partial_3-\frac{\sigma_3}{\sigma_5}\partial_5=L_3^{(3)}-f_2f_4^{-1}L_5^{(3)}.\end{gather*}
From \cite[Lemma 6.2]{AB}, the commutation relation $[T_1,T_3]=0$ holds in the Lie algebra of derivations of~$\mathcal{F}$. Since the operators $L_3^{(3)}$ and $L_5^{(3)}$ are the derivations of the field $\mathcal{F}((\sigma))$ and $f_2,f_4^{-1}\in \mathcal{F}[(\sigma)]$, the operators~$T_1$ and~$T_3$ are also the derivations of the field~$\mathcal{F}((\sigma))$. We consider the following derivations acting on the field~$\mathcal{F}\big(V_g^2\big)$
\begin{gather}
\mathcal{T}_1=-\frac{1}{X_1X_2}\mathcal{L}^{(3)}_5=-\frac{1}{X_1X_2(X_1-X_2)}(X_2\mathcal{D}_1-X_1\mathcal{D}_2),\label{newderia1} \\
\mathcal{T}_3=\mathcal{L}^{(3)}_3+\frac{X_1+X_2}{X_1X_2}\mathcal{L}^{(3)}_5=\frac{1}{X_1-X_2}(\mathcal{D}_2-\mathcal{D}_1)+\frac{X_1+X_2}{X_1X_2(X_1-X_2)}(X_2\mathcal{D}_1-X_1\mathcal{D}_2).\label{newderia2}
\end{gather}

\begin{Proposition}\label{commutealg} The commutation relation $[\mathcal{T}_1,\mathcal{T}_3]=0$ holds.
\end{Proposition}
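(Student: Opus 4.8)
The plan is to route the computation through the two commuting derivations $\mathcal{L}_3^{(3)}$ and $\mathcal{L}_5^{(3)}$ recalled in Section~\ref{section3}, so that the commutator $[\mathcal{T}_1,\mathcal{T}_3]$ collapses to a scalar multiple of $\mathcal{L}_5^{(3)}$, which one then checks is the zero scalar.

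Concretely, set $p=1/(X_1X_2)$ and $q=(X_1+X_2)/(X_1X_2)$, which are elements of $\mathcal{F}\big(V_3^2\big)$; then formulas (\ref{newderia1}) and (\ref{newderia2}) read $\mathcal{T}_1=-p\,\mathcal{L}_5^{(3)}$ and $\mathcal{T}_3=\mathcal{L}_3^{(3)}+q\,\mathcal{L}_5^{(3)}$. For two \emph{commuting} derivations $A,B$ of a commutative ring $R$ and any $p,q\in R$, a short application of the Leibniz rule (the second-order terms cancel, the mixed ones because $AB=BA$) gives
\begin{gather*}
[-pB,\ A+qB]=\big(A(p)+q\,B(p)-p\,B(q)\big)\,B .
\end{gather*}
Taking $A=\mathcal{L}_3^{(3)}$, $B=\mathcal{L}_5^{(3)}$, this reduces Proposition~\ref{commutealg} to the scalar identity
\begin{gather*}
\mathcal{L}_3^{(3)}(p)+q\,\mathcal{L}_5^{(3)}(p)-p\,\mathcal{L}_5^{(3)}(q)=0
\end{gather*}
in $\mathcal{F}\big(V_3^2\big)$.

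For the remaining verification I would use that on a rational function of $X_1,X_2$ alone $\mathcal{D}_k$ acts as $2Y_k\partial_{X_k}$, whence
\begin{gather*}
\mathcal{L}_3^{(3)}(p)=\frac{2(X_2Y_1-X_1Y_2)}{X_1^2X_2^2(X_1-X_2)},\qquad \mathcal{L}_5^{(3)}(p)=\frac{2\big(X_1^2Y_2-X_2^2Y_1\big)}{X_1^2X_2^2(X_1-X_2)},\qquad \mathcal{L}_5^{(3)}(q)=\frac{2\big(X_1^3Y_2-X_2^3Y_1\big)}{X_1^2X_2^2(X_1-X_2)} .
\end{gather*}
Putting everything over the common denominator $X_1^3X_2^3(X_1-X_2)$, the numerator of the left-hand side equals $2$ times
\begin{gather*}
X_1X_2(X_2Y_1-X_1Y_2)+(X_1+X_2)\big(X_1^2Y_2-X_2^2Y_1\big)-\big(X_1^3Y_2-X_2^3Y_1\big),
\end{gather*}
and a one-line expansion shows that the coefficients of $Y_1$ and of $Y_2$ vanish separately; hence the bracket is $0$ and $[\mathcal{T}_1,\mathcal{T}_3]=0$.

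I do not anticipate a genuine obstacle: the only point requiring a little care is organising the commutator so that the second-order differential part drops out, which is exactly where the commutativity $[\mathcal{L}_3^{(3)},\mathcal{L}_5^{(3)}]=0$ is used; after that the check is elementary. (One could instead expand $\mathcal{T}_1$ and $\mathcal{T}_3$ in the commuting pair $\mathcal{D}_1,\mathcal{D}_2$ and compute $[\mathcal{T}_1,\mathcal{T}_3]$ directly, but that is longer and the identity to be verified is less transparent.)
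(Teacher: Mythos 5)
Your proposal is correct and follows the same route as the paper, whose proof simply invokes $[\mathcal{L}_3^{(3)},\mathcal{L}_5^{(3)}]=0$ and "direct calculation"; you have merely made that calculation explicit by reducing the bracket to the scalar identity $\mathcal{L}_3^{(3)}(p)+q\,\mathcal{L}_5^{(3)}(p)-p\,\mathcal{L}_5^{(3)}(q)=0$ and verifying it, and all the intermediate formulas check out.
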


\begin{proof}Since $\big[\mathcal{L}^{(3)}_3,\mathcal{L}^{(3)}_5\big]=0$, the direct calculation shows the proposition.
\end{proof}

\begin{Proposition}\label{newalg}We have
\begin{gather*}
\mathcal{T}_1X_1=\frac{-2Y_1}{X_1(X_1-X_2)},\qquad \mathcal{T}_1Y_1=\frac{-Q_3'(X_1)}{X_1(X_1-X_2)},\\
\mathcal{T}_1X_2=\frac{2Y_2}{X_2(X_1-X_2)},\qquad \mathcal{T}_1Y_2=\frac{Q_3'(X_2)}{X_2(X_1-X_2)},\\
\mathcal{T}_3X_1=\frac{2X_2Y_1}{X_1(X_1-X_2)},\qquad \mathcal{T}_3Y_1=\frac{X_2Q_3'(X_1)}{X_1(X_1-X_2)},\\
\mathcal{T}_3X_2=\frac{-2X_1Y_2}{X_2(X_1-X_2)},\qquad \mathcal{T}_3Y_2=\frac{-X_1Q_3'(X_2)}{X_2(X_1-X_2)}.
\end{gather*}
\end{Proposition}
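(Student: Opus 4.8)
The plan is to reduce the claim to the elementary action of $\mathcal{D}_1$ and $\mathcal{D}_2$ on the coordinate functions and then substitute into the explicit forms of $\mathcal{T}_1$ and $\mathcal{T}_3$ written out in~(\ref{newderia1}) and~(\ref{newderia2}). The only input needed is that, since $\mathcal{D}_k=2Y_k\partial_{X_k}+Q_g'(X_k)\partial_{Y_k}$ differentiates only in the $k$-th pair of variables, for $g=3$ one has
\begin{gather*}
\mathcal{D}_1X_1=2Y_1,\qquad \mathcal{D}_1Y_1=Q_3'(X_1),\qquad \mathcal{D}_1X_2=\mathcal{D}_1Y_2=0,\\
\mathcal{D}_2X_2=2Y_2,\qquad \mathcal{D}_2Y_2=Q_3'(X_2),\qquad \mathcal{D}_2X_1=\mathcal{D}_2Y_1=0.
\end{gather*}

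For the four $\mathcal{T}_1$-values I would apply $\mathcal{T}_1=-\frac{1}{X_1X_2(X_1-X_2)}(X_2\mathcal{D}_1-X_1\mathcal{D}_2)$ directly: on $X_1$ and on $Y_1$ only the $X_2\mathcal{D}_1$ term survives, producing $-\frac{2X_2Y_1}{X_1X_2(X_1-X_2)}$ and $-\frac{X_2Q_3'(X_1)}{X_1X_2(X_1-X_2)}$, and cancelling the factor $X_2$ gives the asserted expressions; on $X_2$ and on $Y_2$ only the $-X_1\mathcal{D}_2$ term survives, and cancelling $X_1$ gives the remaining two (with the opposite sign). For $\mathcal{T}_3$ I would use the identity $\mathcal{T}_3=\mathcal{L}_3^{(3)}+\frac{X_1+X_2}{X_1X_2}\mathcal{L}_5^{(3)}=\mathcal{L}_3^{(3)}-(X_1+X_2)\mathcal{T}_1$, which follows from $\mathcal{T}_1=-\frac{1}{X_1X_2}\mathcal{L}_5^{(3)}$, so that $\mathcal{T}_3$ applied to any generator equals $\mathcal{L}_3^{(3)}$ applied to it minus $(X_1+X_2)$ times the $\mathcal{T}_1$-value just computed. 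Since $\mathcal{L}_3^{(3)}=\frac{1}{X_1-X_2}(\mathcal{D}_2-\mathcal{D}_1)$ gives $\mathcal{L}_3^{(3)}X_1=\frac{-2Y_1}{X_1-X_2}$, one obtains $\mathcal{T}_3X_1=\frac{-2Y_1}{X_1-X_2}+(X_1+X_2)\frac{2Y_1}{X_1(X_1-X_2)}=\frac{2Y_1(-X_1+X_1+X_2)}{X_1(X_1-X_2)}=\frac{2X_2Y_1}{X_1(X_1-X_2)}$, and the three remaining cases go exactly the same way, the only algebraic simplification being the telescoping $-X_1+(X_1+X_2)=X_2$ (respectively $X_2-(X_1+X_2)=-X_1$ for the index-$2$ generators). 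One could equally well substitute the eight identities above straight into~(\ref{newderia2}) and clear denominators.

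There is essentially no obstacle: the whole computation is a handful of lines. The only points requiring attention are the signs coming from $X_2\mathcal{D}_1-X_1\mathcal{D}_2$, the choice of which of $X_1$, $X_2$ to cancel against the denominator in each line, and the telescoping cancellations for $\mathcal{T}_3$. It is worth remarking that each right-hand side is a genuine element of $\mathcal{F}\big(V_g^2\big)$ with $g=3$, so the equalities hold in that field and no reduction modulo the ideal $J_3$ is needed.
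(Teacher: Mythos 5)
Your computation is correct and is exactly the ``direct calculation'' that the paper's one-line proof of Proposition~\ref{newalg} refers to: you evaluate $\mathcal{D}_1$, $\mathcal{D}_2$ on the coordinate functions and substitute into (\ref{newderia1}) and (\ref{newderia2}), with the signs and the telescoping $-X_1+(X_1+X_2)=X_2$, $X_2-(X_1+X_2)=-X_1$ handled correctly. Nothing further is needed.
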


\begin{proof}The direct calculation shows the proposition.
\end{proof}

\begin{Lemma}\label{17}The relations $\mathcal{T}_1\circ\overline{I_3^*}=\overline{I_3^*}\circ T_1$ and $\mathcal{T}_3\circ\overline{I_3^*}=\overline{I_3^*}\circ T_3$ hold.
\end{Lemma}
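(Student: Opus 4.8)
The plan is to reduce the claimed intertwining relations to the already-established Lemma~\ref{16}, using the explicit definitions of $T_1$, $T_3$, $\mathcal{T}_1$, $\mathcal{T}_3$ in terms of $L_3^{(3)}$, $L_5^{(3)}$, $\mathcal{L}_3^{(3)}$, $\mathcal{L}_5^{(3)}$ together with the multiplicativity of $\overline{I_3^*}$. Recall from the definitions in Section~\ref{section4} and the present section that
$T_1 = -f_4^{-1}L_5^{(3)}$ and $T_3 = L_3^{(3)} - f_2 f_4^{-1}L_5^{(3)}$, while from \eqref{newderia1}--\eqref{newderia2} we have $\mathcal{T}_1 = -(X_1X_2)^{-1}\mathcal{L}_5^{(3)}$ and $\mathcal{T}_3 = \mathcal{L}_3^{(3)} + (X_1+X_2)(X_1X_2)^{-1}\mathcal{L}_5^{(3)}$. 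Since $\overline{I_3^*}$ is a field isomorphism, it suffices to check the intertwining on a set of generators of $\mathcal{F}((\sigma))$, but in fact it is cleaner to verify it as an identity of derivations after unwinding these formulas.

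First I would record the key dictionary entries under $\overline{I_3^*}$: by \cite[Proposition~4.1]{AB} (quoted in the excerpt) we have $\overline{I_3^*}(F_i)=u_i$ for $i=2,4,5,7$, and in particular $\overline{I_3^*}(f_2)$ and $\overline{I_3^*}(f_4)$ are the images of $\sigma_3/\sigma_1$ and $\sigma_5/\sigma_1$, which pull back under $I_3$ to the functions $a = (X_1+X_2)/2$ and to an expression determining $b=(X_1-X_2)^2/4$ via $F_4 = \tfrac14 f_2^2 - f_4$. Concretely, $I_3^*$ sends $f_2 \mapsto -(X_1+X_2)$ (so that $F_2 \mapsto a$) and $f_4 \mapsto \tfrac14(X_1+X_2)^2 - b = X_1X_2$ (so that $F_4 = \tfrac14 f_2^2 - f_4 \mapsto a^2 - b = X_1X_2$… ). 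The essential point is the identity $\overline{I_3^*}(f_4) = $ the class of $X_1X_2$ in $\mathcal{F}({\rm Sym}^2(V_3))$; this is exactly what makes the prefactor $-f_4^{-1}$ in $T_1$ match the prefactor $-(X_1X_2)^{-1}$ in $\mathcal{T}_1$, and likewise $f_2 f_4^{-1}$ matches $-(X_1+X_2)(X_1X_2)^{-1}$ up to the sign convention fixed by $\overline{I_3^*}(f_2)= -(X_1+X_2)$. I would pin down these two correspondences precisely (for $f_2$ this is immediate from $F_2=-\tfrac12 f_2$ and $\overline{I_3^*}(F_2)=u_2=a$; for $f_4$ one combines $F_4=\tfrac14 f_2^2 - f_4$, $\overline{I_3^*}(F_4)=u_4=b$, and $a^2-b=X_1X_2$).

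With that dictionary in hand the proof is a short computation. For any $\phi\in\mathcal{F}((\sigma))$,
\[
\mathcal{T}_1\big(\overline{I_3^*}\phi\big) = -\frac{1}{X_1X_2}\,\mathcal{L}_5^{(3)}\big(\overline{I_3^*}\phi\big) = -\overline{I_3^*}\big(f_4^{-1}\big)\cdot \overline{I_3^*}\big(L_5^{(3)}\phi\big) = \overline{I_3^*}\big({-}f_4^{-1}L_5^{(3)}\phi\big) = \overline{I_3^*}\big(T_1\phi\big),
\]
where the second equality uses Lemma~\ref{16} ($\mathcal{L}_5^{(3)}\circ\overline{I_3^*} = \overline{I_3^*}\circ L_5^{(3)}$) together with $\overline{I_3^*}(f_4^{-1}) = (X_1X_2)^{-1}$ and the fact that $\overline{I_3^*}$ is a ring homomorphism. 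The computation for $\mathcal{T}_3$ is the same: $\mathcal{T}_3\circ\overline{I_3^*} = \mathcal{L}_3^{(3)}\circ\overline{I_3^*} + (X_1+X_2)(X_1X_2)^{-1}\mathcal{L}_5^{(3)}\circ\overline{I_3^*} = \overline{I_3^*}\circ L_3^{(3)} - \overline{I_3^*}(f_2 f_4^{-1})\,\overline{I_3^*}\circ L_5^{(3)} = \overline{I_3^*}\circ(L_3^{(3)} - f_2 f_4^{-1}L_5^{(3)}) = \overline{I_3^*}\circ T_3$, again invoking Lemma~\ref{16} for both $L_3^{(3)}$ and $L_5^{(3)}$ and using $\overline{I_3^*}(f_2 f_4^{-1}) = -(X_1+X_2)(X_1X_2)^{-1}$. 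The main obstacle — really the only nontrivial point — is verifying the two prefactor identities $\overline{I_3^*}(f_4) = $ (class of) $X_1X_2$ and $\overline{I_3^*}(f_2) = -(X_1+X_2)$ with the correct signs; once these are fixed from \cite[Proposition~4.1]{AB} everything else is purely formal manipulation of derivations through the ring homomorphism $\overline{I_3^*}$.
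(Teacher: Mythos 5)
Your proof is correct and follows exactly the paper's (much terser) argument: combine Lemma~\ref{16} with the identities $I_3^*(f_2)=-(X_1+X_2)$ and $I_3^*(f_4)=X_1X_2$, which you correctly extract from $\overline{I_3^*}(F_2)=u_2$ and $\overline{I_3^*}(F_4)=u_4$, and then push the prefactors through the ring homomorphism. The only blemish is the garbled parenthetical ``$F_4=\tfrac14 f_2^2-f_4\mapsto a^2-b$'' (in fact $F_4\mapsto b$ while $f_4\mapsto a^2-b=X_1X_2$), but your final derivation of the dictionary is stated correctly, so this does not affect the argument.
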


\begin{proof}From Lemma \ref{16}, $I_3^*(f_2)=-(X_1+X_2)$, and $I_3^*(f_4)=X_1X_2$, we obtain the lemma.
\end{proof}

\begin{Proposition}\label{26} In the space $\mathbb{C}^4$ with coordinates $u_2$, $u_4$, $u_5$, and~$u_7$, we have the following families of rational dynamical systems with constant parameters $y_4$, $y_6$, $y_8$, and~$y_{10}$:
\begin{gather*}
\mathcal{T}_1u_2=\frac{u_2u_5-u_7}{u_4-u_2^2},\qquad \mathcal{T}_1u_4=\frac{2(u_2u_7-u_4u_5)}{u_4-u_2^2},\\
\mathcal{T}_1u_5=\big(u_4-u_2^2\big)^{-1}\big\{u_5^2+14u_2^5-28u_2^3u_4-18u_2u_4^2-8y_4u_2u_4\\
\hphantom{\mathcal{T}_1u_5=}{} +2y_6\big(u_2^2+u_4\big)-2y_8u_2+y_{10}\big\},\\
\mathcal{T}_1u_7=\big(u_4-u_2^2\big)^{-1}\big\{{-}u_5u_7+21u_2^6+35u_2^4u_4-21u_2^2u_4^2-3u_4^3+2y_4\big(5u_2^4-u_4^2\big)\\
\hphantom{\mathcal{T}_1u_7=}{} -2y_6\big(3u_2^3-u_2u_4\big)+y_8\big(3u_2^2-u_4\big)-y_{10}u_2\big\},\\
\mathcal{T}_3u_2=\frac{2u_2u_7-u_4u_5-u_2^2u_5}{u_4-u_2^2},\qquad \mathcal{T}_3u_4=\frac{2\big(2u_2u_4u_5-u_4u_7-u_2^2u_7\big)}{u_4-u_2^2},\\
\mathcal{T}_3u_5=\big(u_4-u_2^2\big)^{-1}\big\{{-}2u_2u_5^2+7u_2^6+63u_2^4u_4-3u_2^2u_4^2-3u_4^3+2y_4\big(5u_2^4+4u_2^2u_4-u_4^2\big)\\
\hphantom{\mathcal{T}_3u_5=}{} -8y_6u_2^3+y_8\big(5u_2^2-u_4\big)-2y_{10}u_2\big\},\\
\mathcal{T}_3u_7=\big(u_4-u_2^2\big)^{-1}\big\{2u_2u_5u_7-21u_2^7-21u_2^5u_4-7u_2^3u_4^2-15u_2u_4^3-2y_4\big(5u_2^5+3u_2u_4^2\big)\\
\hphantom{\mathcal{T}_3u_7=}{} +2y_6\big(3u_2^4+u_4^2\big)-y_8\big(3u_2^3+u_2u_4\big)+y_{10}\big(u_2^2+u_4\big)\big\}.
\end{gather*}
\end{Proposition}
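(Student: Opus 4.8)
The plan is to transport the known polynomial systems of Theorem~\ref{BMsystem} along the Abel--Jacobi map using Lemma~\ref{17}, and then to re-express the results as rational functions of $u_2,u_4,u_5,u_7$ by eliminating $X_1,X_2,Y_1,Y_2$ via the dictionary $u_2=a$, $u_4=b$, $u_5=c$, $u_7=d$. Concretely, from the definitions~\eqref{newderia1} and~\eqref{newderia2} we have, as derivations of $\mathcal{F}\big(V_3^2\big)$,
\begin{gather*}
\mathcal{T}_1=-\frac{1}{X_1X_2}\mathcal{L}^{(3)}_5,\qquad \mathcal{T}_3=\mathcal{L}^{(3)}_3+\frac{X_1+X_2}{X_1X_2}\mathcal{L}^{(3)}_5.
\end{gather*}
Applying $\overline{I_3^*}{}^{-1}$ and recalling $I_3^*(f_2)=-(X_1+X_2)$, $I_3^*(f_4)=X_1X_2$, these correspond under the isomorphism $\overline{I_3^*}\colon\mathcal{F}((\sigma))\to\mathcal{F}\big(\mathrm{Sym}^2(V_3)\big)$ to $T_1=-f_4^{-1}L_5^{(3)}$ and $T_3=L_3^{(3)}-f_2f_4^{-1}L_5^{(3)}$; but for the present proposition we work directly on the $u$-side, where $X_1X_2$ becomes $u_4-u_2^2=b-a^2$ (since $b-a^2=-(X_1X_2)+\tfrac14(X_1+X_2)^2-\tfrac14(X_1-X_2)^2\cdot\!\ldots$; more simply $a^2-b=X_1X_2$, so $X_1X_2=u_2^2-u_4$ — I would double-check the sign here against $b=(X_1-X_2)^2/4$ and $a=(X_1+X_2)/2$, giving $a^2-b=X_1X_2$, hence the denominators $u_4-u_2^2=-X_1X_2$; the factor of $-1$ in $\mathcal{T}_1$ reconciles this).

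The key computational steps are then as follows. First, express $\mathcal{T}_1$ and $\mathcal{T}_3$ acting on $u_2,u_4,u_5,u_7$ in terms of $\mathcal{L}^{(3)}_3 u_j$ and $\mathcal{L}^{(3)}_5 u_j$:
\begin{gather*}
\mathcal{T}_1 u_j=\frac{-1}{u_2^2-u_4}\,\mathcal{L}^{(3)}_5 u_j,\qquad \mathcal{T}_3 u_j=\mathcal{L}^{(3)}_3 u_j-\frac{2u_2}{u_2^2-u_4}\,\mathcal{L}^{(3)}_5 u_j,
\end{gather*}
using $\overline{I_3^*}{}^{-1}(X_1X_2)=u_2^2-u_4$ and $\overline{I_3^*}{}^{-1}(X_1+X_2)=2u_2$. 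Second, substitute the explicit right-hand sides for $\mathcal{L}^{(3)}_3 u_j$ (system~(I)) and $\mathcal{L}^{(3)}_5 u_j$ (system~(II)) from Theorem~\ref{BMsystem}. For $\mathcal{T}_1$ this immediately yields the stated formulas, since $\mathcal{T}_1 u_j$ is literally $-\big(u_2^2-u_4\big)^{-1}=\big(u_4-u_2^2\big)^{-1}$ times the $\mathcal{L}^{(3)}_5 u_j$ column of Theorem~\ref{BMsystem}, term by term. For $\mathcal{T}_3$ one must combine the polynomial $\mathcal{L}^{(3)}_3 u_j$ with $-2u_2\big(u_4-u_2^2\big)^{-1}\mathcal{L}^{(3)}_5 u_j$; writing everything over the common denominator $u_4-u_2^2$ and simplifying $\big(u_4-u_2^2\big)\mathcal{L}^{(3)}_3 u_j-2u_2\,\mathcal{L}^{(3)}_5 u_j$ as a polynomial in $u_2,u_4,u_5,u_7$ with coefficients in $\mathbb{Q}[y_4,\dots,y_{10}]$ produces the claimed numerators.

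The main obstacle is bookkeeping rather than conceptual: the $\mathcal{T}_3$ case requires expanding $\big(u_4-u_2^2\big)$ times a degree-$5$ or degree-$6$ polynomial, subtracting $2u_2$ times the corresponding degree-$5$/degree-$6$ polynomial from system~(II), and checking that all cross-terms collapse to the compact expressions displayed. I would organize this by grading (each $u_{2k-1}$ has degree $-(2k-1)$, each $y_{2i}$ degree $2i$, so both sides are homogeneous of the appropriate weight), which both guides the simplification and serves as an error check: e.g.\ for $\mathcal{T}_3 u_5$ every monomial in the numerator must have weight $-5+\deg(u_4-u_2^2)=-5-4=-9$. One subtlety to verify carefully is the sign convention for $X_1X_2$ versus $u_2^2-u_4$ and the overall sign in $\mathcal{T}_1$ from~\eqref{newderia1}; getting this right is what makes the denominators come out as $u_4-u_2^2$ with the signs shown. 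Finally I would note that, just as in Theorem~\ref{BMsystem}, these identities in $\mathcal{F}\big(V_3^2\big)$ descend to genuine identities among $u_2,u_4,u_5,u_7$ because Lemma~\ref{17} and the isomorphism $\widetilde{\Gamma}_3$ identify the subfield they generate with $\langle\mathbb{C}[a,b,c,d]/A_3\rangle$, on which $a,b,c,d$ are algebraically independent modulo $A_3$.
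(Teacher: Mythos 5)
Your proposal is correct and follows exactly the paper's (one-line) proof: substitute the polynomial expressions for $\mathcal{L}^{(3)}_3u_j$ and $\mathcal{L}^{(3)}_5u_j$ from Theorem~\ref{BMsystem} into the definitions~\eqref{newderia1} and~\eqref{newderia2}, using $X_1X_2=u_2^2-u_4$ and $X_1+X_2=2u_2$. Note only that your displayed formula $\mathcal{T}_3u_j=\mathcal{L}^{(3)}_3u_j-\tfrac{2u_2}{u_2^2-u_4}\mathcal{L}^{(3)}_5u_j$ has the wrong sign on the second term (it should be $+\tfrac{2u_2}{u_2^2-u_4}\mathcal{L}^{(3)}_5u_j$, consistent with $\tfrac{X_1+X_2}{X_1X_2}$), but the computation you actually describe afterwards, with numerator $\big(u_4-u_2^2\big)\mathcal{L}^{(3)}_3u_j-2u_2\,\mathcal{L}^{(3)}_5u_j$ over denominator $u_4-u_2^2$, carries the correct sign and yields the stated formulas.
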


\begin{proof}From Theorem \ref{BMsystem}, (\ref{newderia1}), and (\ref{newderia2}), we obtain the proposition.
\end{proof}

Let $u=4u_2$ and $v=2\big(u_4-u_2^2\big)$. For any $w\in\mathcal{F}\big({\rm Sym}^2(V_3)\big)$, we use the notation $w'=\mathcal{T}_1w$ and $\dot{w}=\mathcal{T}_3w$. Then we obtain the main result of this paper.

\begin{Theorem}\label{dkdv}We obtain the following new system that can be called two parametric deformed KdV-hierarchy:
\begin{gather}
v^4(u'''-4\dot{u}-6uu')-32y_{12}v\dot{u}+32y_{14}(vu'-3u\dot{u})=0,\label{first} \\
v^4(\dot{u}''-4u\dot{u}-2u'v)-32y_{12}v\dot{v}+32y_{14}(vv'-3u\dot{v})=0,\label{second} \\
\dot{u}=v',\label{third} \\
2\dot{v}=vu'-uv'.\label{fourth}
\end{gather}
\end{Theorem}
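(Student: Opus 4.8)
The plan is to translate the rational dynamical system of Proposition~\ref{26} into the variables $u=4u_2$ and $v=2(u_4-u_2^2)$, using the two commuting derivations $\mathcal{T}_1$ and $\mathcal{T}_3$ (written as $w'$ and $\dot w$), and then eliminate the auxiliary coordinates $u_5,u_7$ in favour of derivatives of $u$ and $v$. First I would record the immediate consequences of Proposition~\ref{26}: compute $u'=4\mathcal{T}_1u_2$ and $v'=2(\mathcal{T}_1u_4-2u_2\mathcal{T}_1u_2)$, and likewise $\dot u$ and $\dot v$. A short calculation with the formulas for $\mathcal{T}_1u_2,\mathcal{T}_1u_4,\mathcal{T}_3u_2,\mathcal{T}_3u_4$ should give $\dot u=v'$ directly (this is~\eqref{third}) and $2\dot v=vu'-uv'$ (this is~\eqref{fourth}); these two are the ``easy'' identities and only require organizing the right-hand sides of Proposition~\ref{26} over the common denominator $u_4-u_2^2=v/2$. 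In particular, from $u'=4(u_2u_5-u_7)/(u_4-u_2^2)$ one solves for the combination $u_2u_5-u_7$ in terms of $vu'$, and similarly the other linear combinations appearing in $\mathcal{T}_1u_4$, $\mathcal{T}_3u_2$, $\mathcal{T}_3u_4$ can be expressed through $v,u,u',v'$.

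Next I would obtain expressions for $u_5$ and $u_7$ themselves (not just the combinations $u_2u_5-u_7$ etc.). Solving the two linear relations coming from $u'$ and $v'$ (equivalently from $\mathcal{T}_1u_2$ and $\mathcal{T}_1u_4$) gives $u_5$ and $u_7$ as rational functions of $u,v,u',v'$ (and the constants $y_4,\dots,y_{10}$). The point of the deformed-KdV equations~\eqref{first}--\eqref{second} is then to take a further derivative: apply $\mathcal{T}_1$ (to get the $u'''$, $\dot u''$ terms) to these expressions and substitute the rational right-hand sides for $\mathcal{T}_1u_5$, $\mathcal{T}_1u_7$ from Proposition~\ref{26}. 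Crucially, the relations $\mathcal{T}_1u_5$ and $\mathcal{T}_1u_7$ in Proposition~\ref{26} involve only $y_4,\dots,y_{10}$, not $y_{12}$ or $y_{14}$; the parameters $y_{12},y_{14}$ enter only through the first integrals $H_{12}=M_3(u_2,u_4,u_5,u_7)+y_{12}$ and $H_{14}=\widetilde N_3(u_2,u_4,u_5,u_7)+y_{14}$ (stated after Theorem~\ref{BMsystem}), which on the image of $\overline{I_3^*}$ take the constant values corresponding to the chosen point of $B_3$. Since $\overline{I_3^*}$ is a field isomorphism intertwining $\mathcal{L}^{(3)}_i$ with $L^{(3)}_i$ (Lemma~\ref{16}) and hence $\mathcal{T}_i$ with $T_i$ (Lemma~\ref{17}), the functions $u,v$ built from the $u_i$ pull back to functions on the sigma divisor, and the identities among them are identities in $\mathcal{F}((\sigma))\cong\mathcal{F}(\mathrm{Sym}^2(V_3))$; it therefore suffices to verify~\eqref{first}--\eqref{fourth} as rational identities in $u_2,u_4,u_5,u_7$ modulo the ideal $A_3$ generated by $M_3,N_3$.

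Concretely, for~\eqref{first} I would compute $u'''=\mathcal{T}_1^3u=\mathcal{T}_1^2(u')$, express everything over the power of $v$ that clears denominators (the factor $v^4$ in the statement signals that three applications of $\mathcal{T}_1$, each introducing one power of $v=2(u_4-u_2^2)$ in the denominator, plus bookkeeping, produce $v^4$ after multiplying through), and then check that the resulting polynomial in $u_2,u_4,u_5,u_7,y_4,\dots,y_{14}$ vanishes modulo $A_3$ once $H_{12},H_{14}$ are used to trade $y_{12},y_{14}$ for $M_3,\widetilde N_3$ — i.e. to replace $y_{12}\mapsto H_{12}-M_3$ and $y_{14}\mapsto H_{14}-\widetilde N_3$ and then set the integrals to their (constant) values. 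Equation~\eqref{second} is handled the same way with $\mathcal{T}_1^2\dot u$ (using $\dot u=v'$, so this is $\mathcal{T}_1^2 v' =\mathcal{T}_1^2\mathcal{T}_1\!\big(\tfrac{vu'-uv'}{2}\big)$ via~\eqref{fourth}, or more directly by differentiating the expression for $\dot u$) together with the Proposition~\ref{26} formula for $\mathcal{T}_3u_5$ and $\mathcal{T}_3u_7$ and the commutation $[\mathcal{T}_1,\mathcal{T}_3]=0$ from Proposition~\ref{commutealg}, which lets one move $\mathcal{T}_1$ past $\mathcal{T}_3$ freely.

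The main obstacle will be the elimination and the bulk of the resulting polynomial algebra: after substituting the rational expressions for $u_5,u_7$ and their $\mathcal{T}_1$-derivatives, equations~\eqref{first}--\eqref{second} become large polynomial identities that must be reduced modulo $A_3$, and keeping track of the exact coefficients $-32$ and $+32$ on the $y_{12},y_{14}$ terms requires careful handling of how the first integrals $H_{12},H_{14}$ are substituted (including the footnote convention that $H_{14}$ corresponds to $-N_3/2+u_2M_3+y_{14}$, i.e.\ to $\widetilde N_3+y_{14}$). The conceptual steps — pull everything back via $\overline{I_3^*}$, use Lemmas~\ref{16} and~\ref{17}, use Proposition~\ref{26}, and use the two first integrals to account for $y_{12},y_{14}$ — are straightforward; the work is in the computation, which is best organized by first deriving~\eqref{third} and~\eqref{fourth}, then using them to simplify the higher-derivative expressions before expanding~\eqref{first} and~\eqref{second}.
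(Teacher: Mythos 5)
Your proposal is correct and follows essentially the same route as the paper: both rest on Proposition~\ref{26} together with the relations $M_3(u_2,u_4,u_5,u_7)=\widetilde{N}_3(u_2,u_4,u_5,u_7)=0$ in $\mathcal{F}\big({\rm Sym}^2(V_3)\big)$ (equivalently, the first integrals taking the values $y_{12}$, $y_{14}$) to eliminate the quadratic terms in $u_5$, $u_7$, which is exactly where the deformation parameters enter, followed by a direct polynomial computation. The only organizational difference is that the paper packages the elimination into a single closed-form intermediate identity for $u_2''=\mathcal{T}_1^2u_2$ in terms of $u_2$, $u_4$, $y_4$, $y_{12}$, $y_{14}$ alone and then differentiates it by $\mathcal{T}_1$ and $\mathcal{T}_3$ to get \eqref{first} and \eqref{second}, whereas you solve for $u_5$, $u_7$ explicitly and substitute; these amount to the same calculation.
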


\begin{proof}From Proposition \ref{26}, the direct calculation shows
\begin{gather}
u_2''=2u_4+10u_2^2+y_4-\frac{y_{12}}{\big(u_4-u_2^2\big)^2}-\frac{4y_{14}u_2}{\big(u_4-u_2^2\big)^3},\label{seconddif}
\end{gather}
where in the above calculation we deleted the terms $u_5u_7$ and $u_7^2$ by using the relations
\begin{gather*} M_3(u_2,u_4,u_5,u_7)=\widetilde{N}_3(u_2,u_4,u_5,u_7)=0\end{gather*} in $\mathcal{F}\big({\rm Sym}^2(V_3)\big)$ (see Section~\ref{section3}). By differentiating the both sides of~(\ref{seconddif}) with respect to~$\mathcal{T}_1$, we obtain
\begin{gather*}
u_2''' = -\frac{4\big(4u_2u_7 + u_4u_5 - 5u_2^2u_5\big)}{u_4-u_2^2}+4y_{12}\frac{2u_2u_7 - u_4u_5 - u_2^2u_5}{\big(u_4-u_2^2\big)^4}\\
\hphantom{u_2''' =}{} +4y_{14}\frac{u_4u_7+11u_2^2u_7 - 7u_2u_4u_5 - 5u_2^3u_5}{\big(u_4-u_2^2\big)^5}\\
\hphantom{u_2'''}{} =4\dot{u}_2+24u_2u_2'+\frac{4y_{12}\dot{u}_2}{\big(u_4-u_2^2\big)^3}+\frac{4y_{14}\{\big(u_2^2-u_4\big)u_2'+6u_2\dot{u}_2\}}{\big(u_4-u_2^2\big)^4}.
\end{gather*}
Therefore we obtain the equation (\ref{first}). By differentiating the both sides of (\ref{seconddif}) with respect to $\mathcal{T}_3$, we obtain
\begin{gather*}
\dot{u}_2''=-4\frac{u_7\big(u_4-9u_2^2\big)+u_2u_5\big(3u_4+5u_2^2\big)}{u_4-u_2^2}+4y_{12}\frac{u_2u_5\big(3u_4+u_2^2\big)-u_7\big(u_4+3u_2^2\big)}{\big(u_4-u_2^2\big)^4} \\
\qquad {} +4y_{14}\frac{u_5\big(u_4^2+18u_2^2u_4+5u_2^4\big)-8u_2u_7\big(u_4+2u_2^2\big)}{\big(u_4-u_2^2\big)^5}, \\
\quad {} =16u_2\dot{u}_2+4\big(u_4-u_2^2\big)u_2'+2y_{12}\frac{\dot{\big(u_4-u_2^2\big)}}{\big(u_4-u_2^2\big)^3}-2y_{14}\frac{6u_2\dot{\big(u_2^2-u_4\big)}+\big(u_2^2-u_4\big)\big(u_2^2-u_4\big)'}{\big(u_4-u_2^2\big)^4}.
\end{gather*}
Therefore we obtain the equation (\ref{second}). From Proposition~\ref{26}, we obtain the equations (\ref{third}) and~(\ref{fourth}).
\end{proof}

\section{Relation with a curve of genus 2}

Let us consider the homomorphism of the field of rational functions $\mathbb{C}(X_1,Y_1,X_2,Y_2)$
\begin{gather*}\psi\colon \ \mathbb{C}(X_1,Y_1,X_2,Y_2)\to\mathbb{C}(X_1,Y_1,X_2,Y_2),\qquad X_i\mapsto X_i,\qquad Y_i\mapsto \frac{Y_i}{X_i},\qquad i=1,2.\end{gather*}
The map $\psi$ induces the homomorphism
\begin{gather*}{\rm Sym}(\psi)\colon \ \mathcal{F}\big({\rm Sym}^2\big(\mathbb{C}^2\big)\big)\to\mathcal{F}\big({\rm Sym}^2\big(\mathbb{C}^2\big)\big).\end{gather*}
In Section~\ref{section3}, we noted $\mathcal{F}\big({\rm Sym}^2\big(\mathbb{C}^2\big)\big)=\mathbb{C}(a,b,c,d)$. The map ${\rm Sym}(\psi)$ transforms the generators $a$, $b$, $c$, and $d$ as follows
\begin{gather}
a\mapsto a,\qquad b \mapsto b,\qquad c\mapsto \frac{ac-d}{a^2-b},\qquad d\mapsto \frac{ad-bc}{a^2-b}.\label{trans}
\end{gather}
Fix any constant vector $(y_4,y_6,y_8,y_{10})\in\mathbb{C}^4$. We consider a curve $V_2$
\begin{gather*}V_2=\big\{(X,Y)\in\mathbb{C}^2\,|\,Y^2=X^5+y_4X^3-y_6X^2+y_8X-y_{10}\big\}\end{gather*}
and a curve $V_{3,2}$
\begin{gather*}V_{3,2}=\big\{(X,Y)\in\mathbb{C}^2\,|\,Y^2=X^7+y_4X^5-y_6X^4+y_8X^3-y_{10}X^2\big\}.\end{gather*}
The map $\psi$ induces the homomorphism
\begin{gather*}\psi_1\colon \ \mathcal{F}\big(V_2^2\big)\to\mathcal{F}\big(V_{3,2}^2\big),\qquad X_i\mapsto X_i,\qquad Y_i\mapsto \frac{Y_i}{X_i},\qquad i=1,2.\end{gather*}

\begin{Proposition}The map $\psi_1$ is an isomorphism between $\mathcal{F}\big(V_2^2\big)$ and $\mathcal{F}\big(V_{3,2}^2\big)$.
\end{Proposition}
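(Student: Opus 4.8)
The plan is to exploit the polynomial identity
\begin{gather*}
X^7+y_4X^5-y_6X^4+y_8X^3-y_{10}X^2=X^2\big(X^5+y_4X^3-y_6X^2+y_8X-y_{10}\big),
\end{gather*}
which says that the defining polynomial $Q_{3,2}(X)$ of $V_{3,2}$ equals $X^2Q_2(X)$, where $Q_2(X)=X^5+y_4X^3-y_6X^2+y_8X-y_{10}$ is the defining polynomial of $V_2$. Geometrically this means $V_2$ and $V_{3,2}$ are birationally identified by $(X,Y)\mapsto(X,XY)$, with inverse $(X,Y)\mapsto(X,Y/X)$, which is exactly the map used to define $\psi_1$ on the two factors; so one expects $\psi_1$ to be inverted by $X_i\mapsto X_i$, $Y_i\mapsto X_iY_i$.

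The first, and essentially complete, step is the observation that $\psi_1$ is automatically injective, being a homomorphism between the fields $\mathcal{F}\big(V_2^2\big)$ and $\mathcal{F}\big(V_{3,2}^2\big)$: its kernel is an ideal of $\mathcal{F}\big(V_2^2\big)$ not containing $1$, hence $\{0\}$. So only surjectivity remains. The image $\psi_1\big(\mathcal{F}\big(V_2^2\big)\big)$ is a subfield of $\mathcal{F}\big(V_{3,2}^2\big)$ containing $\psi_1(X_i)=X_i$ and $\psi_1(Y_i)=Y_i/X_i$, hence also their product $X_i\cdot(Y_i/X_i)=Y_i$, for $i=1,2$. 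Since $\mathcal{F}\big(V_{3,2}^2\big)=\big\langle\mathbb{C}[X_1,Y_1,X_2,Y_2]/J_{3,2}\big\rangle$ is generated as a field over $\mathbb{C}$ by $X_1,Y_1,X_2,Y_2$, the image is all of $\mathcal{F}\big(V_{3,2}^2\big)$, and $\psi_1$ is an isomorphism.

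If a concrete description of the inverse is wanted, I would instead check directly that $X_i\mapsto X_i$, $Y_i\mapsto X_iY_i$ defines a homomorphism $\chi\colon\mathcal{F}\big(V_{3,2}^2\big)\to\mathcal{F}\big(V_2^2\big)$: by the displayed identity it sends each generator $Y_i^2-Q_{3,2}(X_i)$ of $J_{3,2}$ to $X_i^2Y_i^2-X_i^2Q_2(X_i)=X_i^2\big(Y_i^2-Q_2(X_i)\big)\in J_2$, so it is well defined; and a one-line computation on $X_i,Y_i$, using that $X_i$ is a unit in both function fields, gives $\chi\circ\psi_1=\mathrm{id}$ and $\psi_1\circ\chi=\mathrm{id}$. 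I do not expect any genuine obstacle in this argument: the only points that deserve a moment's attention are the factorization $Q_{3,2}(X)=X^2Q_2(X)$ and the invertibility of $X_i$, and both are immediate from the definitions.
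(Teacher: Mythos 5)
Your proposal is correct, and your second paragraph is exactly the paper's proof: the paper defines $\psi_2\colon X_i\mapsto X_i$, $Y_i\mapsto X_iY_i$ (your $\chi$) and checks $\psi_2\circ\psi_1=\operatorname{id}$ and $\psi_1\circ\psi_2=\operatorname{id}$. Your first argument (automatic injectivity of a field homomorphism plus surjectivity from $Y_i=X_i\cdot(Y_i/X_i)$ and the fact that the $X_i$, $Y_i$ generate the function field) is an equivalent repackaging of the same idea.
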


\begin{proof}We can consider the map
\begin{gather*}\psi_2 \colon \ \mathcal{F}\big(V_{3,2}^2\big)\to\mathcal{F}\big(V_2^2\big),\qquad X_i\to X_i,\qquad Y_i\to X_iY_i,\qquad i=1,2.\end{gather*}
Then we can check
\begin{gather*}\psi_2\circ\psi_1=\operatorname{id}_{\mathcal{F}\big(V_2^2\big)},\qquad \psi_1\circ\psi_2=\operatorname{id}_{\mathcal{F}\big(V_{3,2}^2\big)}.\tag*{\qed}\end{gather*}\renewcommand{\qed}{}
\end{proof}

\begin{Proposition}The map $\psi_1$ is an isomorphism between $\mathcal{F}\big({\rm Sym}^2(V_2)\big)$ and $\mathcal{F}({\rm Sym}^2(V_{3,2}))$, and we have
\begin{gather}
\psi_1(u_2)=u_2,\qquad\psi_1(u_4)=u_4,\qquad\psi_1(u_3)=\frac{u_2u_5-u_7}{u_2^2-u_4},\qquad\psi_1(u_5)=\frac{u_2u_7-u_4u_5}{u_2^2-u_4}.\label{trans2}
\end{gather}
\end{Proposition}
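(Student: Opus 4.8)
The plan is to leverage the previously established isomorphism $\psi_1\colon\mathcal{F}\big(V_2^2\big)\to\mathcal{F}\big(V_{3,2}^2\big)$ together with the explicit transformation formulas~(\ref{trans}) on the generators $a,b,c,d$. First I would recall that $\mathcal{F}\big({\rm Sym}^2(V_2)\big)$ is, by the definition in Section~\ref{section3}, the subfield of those $h\in\mathcal{F}\big(V_2^2\big)$ that admit a symmetric representative in $\mathcal{F}\big({\rm Sym}^2\big(\mathbb{C}^2\big)\big)=\mathbb{C}(a,b,c,d)$; the analogous statement holds for $V_{3,2}$. Since $\psi$ (hence ${\rm Sym}(\psi)$) is compatible with the $\mathbb{Z}/2$-action swapping the two points, $\psi_1$ carries symmetric representatives to symmetric representatives. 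Thus $\psi_1$ restricts to a ring homomorphism $\mathcal{F}\big({\rm Sym}^2(V_2)\big)\to\mathcal{F}\big({\rm Sym}^2(V_{3,2})\big)$, and the same argument applied to the inverse $\psi_2$ (which is induced by $X_i\mapsto X_i$, $Y_i\mapsto X_iY_i$, again equivariant) shows this restriction is an isomorphism.

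Next I would compute the images of the four distinguished elements $u_2,u_4,u_3,u_5\in\mathcal{F}\big({\rm Sym}^2(V_2)\big)$. For $g=2$ these are the equivalence classes of $a,b,c,d$ respectively in $\mathcal{F}\big(V_2^2\big)$. Applying~(\ref{trans}), which records precisely the effect of ${\rm Sym}(\psi)$ on $a,b,c,d$, gives
\[
\psi_1(u_2)=u_2,\qquad \psi_1(u_4)=u_4,\qquad \psi_1(u_3)=\frac{ac-d}{a^2-b}\bigg|_{\rm class},\qquad \psi_1(u_5)=\frac{ad-bc}{a^2-b}\bigg|_{\rm class}.
\]
Now I would pass from the curve $V_2$ to the curve $V_{3,2}$: in $\mathcal{F}\big(V_{3,2}^2\big)$ the classes of $a,b,c,d$ are, by the definitions in Section~\ref{section3} applied with genus $3$ but the degenerate polynomial $Q_{3,2}(X)=X^7+y_4X^5-y_6X^4+y_8X^3-y_{10}X^2$, exactly $u_2,u_4,u_5,u_7$ respectively (the genus-$3$ coordinates). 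Substituting $a\mapsto u_2$, $b\mapsto u_4$, $c\mapsto u_5$, $d\mapsto u_7$ into the right-hand sides above yields
\[
\psi_1(u_3)=\frac{u_2u_5-u_7}{u_2^2-u_4},\qquad \psi_1(u_5)=\frac{u_2u_7-u_4u_5}{u_2^2-u_4},
\]
which is~(\ref{trans2}).

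The main obstacle — and the point that requires genuine care rather than routine computation — is the bookkeeping of the two different meanings of the symbols $u_3,u_5,u_7$ on the two sides: on the source side they are the genus-$2$ coordinate functions (classes of $a,b,c,d$ for $V_2$), while on the target side the same letters denote the genus-$3$ coordinate functions attached to the degenerate curve $V_{3,2}$. One must check that under $\psi_1$ the class of $a$ (resp.\ $b,c,d$) for $V_2$ really maps to the class of $a$ (resp.\ $b$, and the transformed $c,d$) for $V_{3,2}$, i.e.\ that $\psi_1$ is compatible with the quotient maps $\mathcal{F}\big(\mathbb{C}^{2\cdot2}\big)\twoheadrightarrow\mathcal{F}\big(V_2^2\big)$ and $\mathcal{F}\big(\mathbb{C}^{2\cdot2}\big)\twoheadrightarrow\mathcal{F}\big(V_{3,2}^2\big)$; this is exactly the content of the relation $Y_i^2=Q_2(X_i)$ being transported to $Y_i^2=Q_{3,2}(X_i)$ under $Y_i\mapsto Y_i/X_i$, which holds since $X_i^2\big(X_i^5+y_4X_i^3-y_6X_i^2+y_8X_i-y_{10}\big)=X_i^7+y_4X_i^5-y_6X_i^4+y_8X_i^3-y_{10}X_i^2$. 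Once this identification is in place, the formulas~(\ref{trans2}) follow by direct substitution.
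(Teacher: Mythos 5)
Your proposal is correct and follows essentially the same route as the paper: the paper also deduces the isomorphism of the symmetric subfields from the containments $\psi_1\big(\mathcal{F}\big({\rm Sym}^2(V_2)\big)\big)\subset\mathcal{F}\big({\rm Sym}^2(V_{3,2})\big)$ and $\psi_2\big(\mathcal{F}\big({\rm Sym}^2(V_{3,2})\big)\big)\subset\mathcal{F}\big({\rm Sym}^2(V_2)\big)$, and obtains the formulas~(\ref{trans2}) directly from~(\ref{trans}). Your additional checks (equivariance under the swap of factors and compatibility with the quotient by the curve relations via $X_i^2Q_2(X_i)=Q_{3,2}(X_i)$) merely make explicit what the paper leaves implicit.
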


\begin{proof}Since $\psi_1\big(\mathcal{F}\big({\rm Sym}^2(V_2)\big)\big)\subset\mathcal{F}\big({\rm Sym}^2(V_{3,2})\big)$ and $\psi_2\big(\mathcal{F}\big({\rm Sym}^2(V_{3,2})\big)\big)\subset\mathcal{F}\big({\rm Sym}^2(V_2)\big)$, the map $\psi_1$ is an isomorphism between $\mathcal{F}\big({\rm Sym}^2(V_2)\big)$ and $\mathcal{F}\big({\rm Sym}^2(V_{3,2})\big)$. From~(\ref{trans}) we obtain the relations~(\ref{trans2}).
\end{proof}

\begin{Proposition}\label{tr}We have
\begin{gather*}\mathcal{T}_1\circ\psi_1=\psi_1\circ\mathcal{L}_1^{(2)},\qquad \mathcal{T}_3\circ\psi_1=\psi_1\circ\mathcal{L}_3^{(2)}.\end{gather*}
\end{Proposition}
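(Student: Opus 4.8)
\textbf{Proof proposal for Proposition \ref{tr}.}

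The plan is to verify the two intertwining relations by evaluating both sides on a set of generators of $\mathcal{F}\big({\rm Sym}^2(V_2)\big)$ and invoking the fact that the maps involved are ring homomorphisms (indeed field homomorphisms). Since $\mathcal{F}\big({\rm Sym}^2(V_2)\big)$ is generated over $\mathbb{C}$ by $u_2,u_4,u_3,u_5$ — these being the images of $a,b,c,d$ under $\widetilde{\Gamma}_2$ — and since $\mathcal{T}_1,\mathcal{T}_3,\mathcal{L}_1^{(2)},\mathcal{L}_3^{(2)}$ are all derivations while $\psi_1$ is a homomorphism, it suffices to check $\mathcal{T}_1\psi_1(u_j)=\psi_1\big(\mathcal{L}_1^{(2)}u_j\big)$ and the analogous identity with $\mathcal{T}_3,\mathcal{L}_3^{(2)}$ for $j=2,4,3,5$. (One should note that once the relation holds on generators and both sides are derivations over the homomorphism $\psi_1$, it extends automatically to the whole field by the Leibniz rule and linearity; a short remark to this effect belongs at the start.)

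First I would compute the right-hand sides $\psi_1\big(\mathcal{L}_i^{(2)}u_j\big)$. The operators $\mathcal{L}_1^{(2)}=\mathcal{L}_{2g-3}^{(g)}$ and $\mathcal{L}_3^{(2)}=\mathcal{L}_{2g-1}^{(g)}$ for $g=2$ act on $u_2,u_4,u_3,u_5$ by the polynomial formulas of \cite[Lemmas 16 and 17]{BM} (the $g=2$ analogue of Theorem~\ref{BMsystem}), with parameters $y_4,y_6,y_8,y_{10}$; applying $\psi_1$ just replaces each $u_j$ by the expression in~(\ref{trans2}). Second I would compute the left-hand sides $\mathcal{T}_i\psi_1(u_j)$: here $\psi_1(u_2)=u_2$ and $\psi_1(u_4)=u_4$ are handled directly by Proposition~\ref{26}, while $\psi_1(u_3)=(u_2u_5-u_7)/(u_2^2-u_4)$ and $\psi_1(u_5)=(u_2u_7-u_4u_5)/(u_2^2-u_4)$ require applying $\mathcal{T}_i$ as a derivation via the quotient rule, again using Proposition~\ref{26} for the derivatives of $u_2,u_4,u_5,u_7$. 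Throughout, one is working in the field $\mathcal{F}\big({\rm Sym}^2(V_{3,2})\big)$, so the relations $M_3=\widetilde{N}_3=0$ (with the vanishing parameters $y_{12}=y_{14}=0$ appropriate to $V_{3,2}$) may be used to eliminate the quadratic monomials $u_5u_7,u_7^2$ whenever they appear — exactly as in the proof of Theorem~\ref{dkdv}.

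The two computed families of rational expressions must then be matched. I expect the main obstacle to be purely computational: the $\mathcal{T}_i\psi_1(u_j)$ come out as rational functions with denominators $\big(u_2^2-u_4\big)^2$ or $\big(u_2^2-u_4\big)^3$ after the quotient rule, and reducing these to the comparatively simpler forms produced by $\psi_1\big(\mathcal{L}_i^{(2)}u_j\big)$ will require nontrivial cancellation that is only visible after substituting the curve relations $M_3=\widetilde{N}_3=0$ of $V_{3,2}$. A useful organizing device is the observation that $\psi$ is precisely the change of variable $Y_i\mapsto Y_i/X_i$ carrying $V_2$ to $V_{3,2}$, and that under this substitution $\mathcal{D}_k^{(2)}=2Y_k\partial_{X_k}+Q_2'(X_k)\partial_{Y_k}$ transforms in a controlled way; tracking $\mathcal{D}_k$ through $\psi$ directly — rather than through the generators $u_j$ — may shortcut the verification, since then the identities $\mathcal{T}_1=-\frac{1}{X_1X_2}\mathcal{L}_5^{(3)}$ and $\mathcal{T}_3=\mathcal{L}_3^{(3)}+\frac{X_1+X_2}{X_1X_2}\mathcal{L}_5^{(3)}$ reduce the claim to an identity among rational vector fields on $\mathbb{C}^4$ with coordinates $X_1,Y_1,X_2,Y_2$, where everything is a finite explicit check. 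Either way, no conceptual difficulty arises beyond bookkeeping, and I would close by remarking that the proof is completed by the density of the generators and the derivation property, as noted above.
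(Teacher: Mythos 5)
Your proposal is correct in substance, but your \emph{primary} route differs from the paper's, while the ``shortcut'' you mention at the end is in fact exactly what the paper does. The paper's proof is the one-line check that $\mathcal{T}_i\circ\psi_1$ and $\psi_1\circ\mathcal{L}_i^{(2)}$ agree on $X_1,Y_1,X_2,Y_2$ --- the generators of the full field $\mathcal{F}\big(V_2^2\big)$ --- using that both sides are $\psi_1$-derivations; the needed values of $\mathcal{T}_iX_k$, $\mathcal{T}_iY_k$ are already recorded in Proposition~\ref{newalg}, and $\mathcal{L}_i^{(2)}X_k$, $\mathcal{L}_i^{(2)}Y_k$ read off from $\mathcal{D}_k=2Y_k\partial_{X_k}+Q_2'(X_k)\partial_{Y_k}$, so the verification is four short identities per operator with no elimination required. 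Your main plan --- checking on $u_2,u_4,u_3,u_5$ via the formulas of \eqref{trans2}, Proposition~\ref{26}, the $g=2$ analogue of Theorem~\ref{BMsystem}, and the relations $M_3=\widetilde{N}_3=0$ with $y_{12}=y_{14}=0$ --- would also work, but it is considerably heavier (quotient rule on the expressions with denominator $u_2^2-u_4$, followed by cancellations only visible after eliminating $u_5u_7$, $u_7^2$, $u_5^2$), and it carries one small logical caveat: since $u_2,u_4,u_3,u_5$ generate only $\mathcal{F}\big({\rm Sym}^2(V_2)\big)$, that route establishes the intertwining only on the symmetric subfield rather than on all of $\mathcal{F}\big(V_2^2\big)$ where $\psi_1$ is defined. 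That restricted statement suffices for the paper's subsequent application, but the check on $X_i,Y_i$ is both shorter and stronger; you correctly identified it as the better organizing device, and I would promote it from a closing remark to the proof itself.
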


\begin{proof}By the direct calculation we can check $\mathcal{T}_1\circ\psi_1(X_i)=\psi_1\circ\mathcal{L}_1^{(2)}(X_i)$ and $\mathcal{T}_1\circ\psi_1(Y_i)=\psi_1\circ\mathcal{L}_1^{(2)}(Y_i)$ for $i=1,2$.
Therefore we obtain $\mathcal{T}_1\circ\psi_1=\psi_1\circ\mathcal{L}_1^{(2)}$.
Similary, we obtain $\mathcal{T}_3\circ\psi_1=\psi_1\circ\mathcal{L}_3^{(2)}$.
\end{proof}

We assume $(y_4,y_6,y_8,y_{10})\in B_2$. Let us consider the Abel--Jacobi map of the curve $V_2$
\begin{gather*}
I_2 \colon \ {\rm Sym}^2(V_2)\to{\rm Jac}(V_2)=\mathbb{C}^2/\Lambda_2,\qquad (P_1,P_2)\mapsto\int_{\infty}^{P_1}{\rm d}u+\int_{\infty}^{P_2}{\rm d}u.
\end{gather*}
Let $\mathcal{F}({\rm Jac}(V_2))$ be the field of meromorphic functions on the Jacobian ${\rm Jac}(V_2)$.
The Abel--Jacobi map $I_2$ induces the isomorphism of the fields:
\begin{gather*}
I_2^* \colon\ \mathcal{F}({\rm Jac}(V_2))\to\mathcal{F}\big({\rm Sym}^2(V_2)\big),\qquad f\mapsto f\circ I_2.
\end{gather*}
As derivations of $\mathcal{F}\big(V_2^2\big)$, the derivations $\mathcal{L}_1^{(2)}$ and $\mathcal{L}_3^{(2)}$ can be expressed as \cite[Section~6]{AB}\footnote{In~\cite{AB}, these expressions are given for $g=3$ and we can prove them for any~$g$ similarly.}
\begin{gather*}
\mathcal{L}_1^{(2)}=\frac{1}{X_1-X_2}\{-2Y_1({\rm d}_{P_1}/{\rm d}X_1)+2Y_2({\rm d}_{P_2}/{\rm d}X_2)\},\\
\mathcal{L}_3^{(2)}=\frac{1}{X_1-X_2}\{2X_2Y_1({\rm d}_{P_1}/{\rm d}X_1)-2X_1Y_2({\rm d}_{P_2}/{\rm d}X_2)\},
\end{gather*}
where $P_i=(X_i,Y_i)\in V_2$, $i=1,2$, we regard $X_i$ and $Y_i$ as meromorphic functions on~$V_2^2$, and~${\rm d}X_i$ and~${\rm d}Y_i$ are the total differentials of $X_i$ and~$Y_i$ for $i=1,2$. Let us describe the action of these operators in more detail. For $g(P_1,P_2)\in\mathcal{F}\big(V_2^2\big)$, ${\rm d}_{P_i}(g)$ is the total differential of~$g$ as a~meromorphic function of~$P_i$. Then ${\rm d}_{P_i}(g)/{\rm d}X_i$ is the meromorphic function on $V_2^2$ determined uniquely by ${\rm d}_{P_i}(g)=({\rm d}_{P_i}(g)/{\rm d}X_i)\cdot {\rm d}X_i$. We consider the following derivations of $\mathcal{F}({\rm Jac}(V_2))$
\begin{gather*}L_1^{(2)}=\frac{\partial}{\partial w_1},\qquad L_3^{(2)}=\frac{\partial}{\partial w_3}.\end{gather*}

\begin{Lemma}We have $\mathcal{L}_1^{(2)}\circ I_2^*=I_2^*\circ L_1^{(2)}$ and $\mathcal{L}_3^{(2)}\circ I_2^*=I_2^*\circ L_3^{(2)}$.
\end{Lemma}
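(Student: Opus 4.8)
The plan is to verify the two intertwining relations $\mathcal{L}_1^{(2)}\circ I_2^*=I_2^*\circ L_1^{(2)}$ and $\mathcal{L}_3^{(2)}\circ I_2^*=I_2^*\circ L_3^{(2)}$ by checking them on a generating set of the field $\mathcal{F}({\rm Jac}(V_2))$. Since $I_2^*$ is a ring isomorphism and both sides of each identity are derivations of $\mathcal{F}\big({\rm Sym}^2(V_2)\big)$ composed appropriately, it suffices to check the equality of the two composite maps on functions that generate $\mathcal{F}({\rm Jac}(V_2))$ over $\mathbb{C}$; the natural choice is the Kleinian $\wp$-functions $\wp_{1,1},\wp_{1,3},\wp_{3,3}$ (or equivalently the coordinates $a,b,c,d$ on ${\rm Sym}^2\big(\mathbb{C}^2\big)$ pulled back via $I_2$). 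A derivation is determined by its values on generators, so once the two sides agree on generators they agree on the whole field.

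First I would recall how $I_2^*$ acts. Under the Abel--Jacobi map, the symmetric functions of the pair $(P_1,P_2)=((X_1,Y_1),(X_2,Y_2))$ are expressed through sigma/$\wp$-functions: concretely $X_1+X_2=2a$, $X_1X_2=a^2-b$, etc., correspond to $\wp$-functions of $(w_1,w_3)$, and the standard inversion formulas give $\partial_{w_1}$ and $\partial_{w_3}$ of these in terms of $X_i,Y_i$. Equivalently, one uses the classical fact that along the image of ${\rm Sym}^2(V_2)$, $dw_1$ and $dw_3$ pull back to $-\tfrac{1}{2Y}dX$ and $-\tfrac{X}{2Y}dX$ on each factor, so $\partial_{w_1}$ pulled back is $\tfrac{1}{X_1-X_2}\{-2Y_1\,d_{P_1}/dX_1+2Y_2\,d_{P_2}/dX_2\}$ after inverting the Jacobian of the map $(X_1,X_2)\mapsto(w_1,w_3)$. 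That inversion is precisely the computation that produces the displayed formulas for $\mathcal{L}_1^{(2)}$ and $\mathcal{L}_3^{(2)}$: the $2\times 2$ matrix $\big(\tfrac{\partial w_i}{\partial X_j}\big)$ has the form $\tfrac{-1}{2}\big(\begin{smallmatrix}1/Y_1 & 1/Y_2\\ X_1/Y_1 & X_2/Y_2\end{smallmatrix}\big)$, whose inverse has entries $\tfrac{2Y_j}{\pm(X_1-X_2)}$ and $\tfrac{\mp 2X_kY_j}{X_1-X_2}$. Hence I would show directly that $\mathcal{L}_1^{(2)}$ equals $I_2^*\circ\partial_{w_1}\circ (I_2^*)^{-1}$ on the generators $X_i,Y_i$ (and therefore everywhere), and likewise $\mathcal{L}_3^{(2)}$ for $\partial_{w_3}$; this is the content of the cited \cite[Section~6]{AB} adapted to $g=2$, which the excerpt already grants us for general $g$.

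The cleanest route, which I would actually carry out, is to reduce to the already-proved genus-3 statement via the isomorphisms built in this section. We have $I_3^*\circ L_i^{(3)}$ versus $\mathcal{L}_i^{(3)}\circ I_3^*$ handled by Lemma~\ref{16}, and Proposition~\ref{tr} gives $\mathcal{T}_i\circ\psi_1=\psi_1\circ\mathcal{L}_i^{(2)}$ while Lemma~\ref{17} gives $\mathcal{T}_i\circ\overline{I_3^*}=\overline{I_3^*}\circ T_i$. Combining these with the relation between $T_i$ on $\mathcal{F}((\sigma))$ for the genus-3 curve $V_3$ specialized to $V_{3,2}$ (where $y_{12}=y_{14}=0$) and the ordinary partials $\partial_{w_1},\partial_{w_3}$ on ${\rm Jac}(V_2)$ — which is exactly what $\psi_1$ identifies — one gets the claimed intertwining for $\mathcal{L}_i^{(2)}$. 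The main obstacle is bookkeeping: one must check that the composite isomorphism $\psi_1^{-1}\circ \overline{I_3^*}$ restricted to $\mathcal{F}({\rm Jac}(V_2))$ really is $I_2^*$ up to the identification of coordinates $(w_1,w_3)$ on the two Jacobians, i.e., that $\psi_1$ carries $T_1,T_3$ on the sigma divisor of $V_{3,2}$ to $\partial_{w_1},\partial_{w_3}$ on ${\rm Jac}(V_2)$. Given that $V_{3,2}$ is the curve $Y^2=X^2(X^5+y_4X^3-y_6X^2+y_8X-y_{10})$, the substitution $Y\mapsto XY$ is a birational map to $V_2$ that is compatible with the holomorphic differentials, so this identification is routine but must be spelled out. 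Once it is in place, the lemma follows immediately from Lemma~\ref{16}, Lemma~\ref{17}, and Proposition~\ref{tr}.
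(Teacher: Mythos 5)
Your first sketch (invert the Jacobian matrix of $(X_1,X_2)\mapsto(w_1,w_3)$ and apply the chain rule) is essentially the paper's proof: the paper takes an arbitrary $h\in\mathcal{F}({\rm Jac}(V_2))$, expands ${\rm d}_{P_i}(h(w))/{\rm d}X_i$ via ${\rm d}u_1=-\frac{X}{2Y}{\rm d}X$, ${\rm d}u_3=-\frac{1}{2Y}{\rm d}X$, and watches the cross terms cancel to leave $h_1(w)$. Two remarks on your version of this: (i) you have the holomorphic differentials swapped --- with the paper's normalization ${\rm d}u_{2i-1}=-\frac{X^{g-i}}{2Y}{\rm d}X$ it is $w_1$ that pairs with $-\frac{X}{2Y}{\rm d}X$ and $w_3$ with $-\frac{1}{2Y}{\rm d}X$, so your matrix $\big(\partial w_i/\partial X_j\big)$ has its rows interchanged and would intertwine $\mathcal{L}_1^{(2)}$ with $\partial_{w_3}$ rather than $\partial_{w_1}$; (ii) the reduction to generators is unnecessary --- the chain-rule identity holds verbatim for arbitrary $h$, which is cleaner than picking out $\wp$-functions (and note that $X_i$, $Y_i$ themselves are not in the image of $I_2^*$, so they are not the right test functions for this particular identity).

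The route you say you would actually carry out does not work. The curve $V_{3,2}$ is given by $Y^2=X^2\big(X^5+y_4X^3-y_6X^2+y_8X-y_{10}\big)$, i.e., its parameter vector has $y_{12}=y_{14}=0$ and $Q_3$ acquires the double root $X=0$, so this vector lies in $\Delta_3$, not in $B_3$. Consequently the genus-$3$ sigma function, the field $\mathcal{F}((\sigma))$, and Lemmas \ref{16} and \ref{17} are simply not available for $V_{3,2}$: there is no map $\overline{I_3^*}$ for that curve, and the composite $\psi_1^{-1}\circ\overline{I_3^*}$ you want to compare with $I_2^*$ is undefined. What \emph{is} defined on $V_{3,2}$ are the purely algebraic operators $\mathcal{T}_1$, $\mathcal{T}_3$ and the isomorphism $\psi_1$, and the entire point of this section is to use the present lemma together with Proposition \ref{tr} to transport the \emph{transcendental} operators $\partial_{w_1}$, $\partial_{w_3}$ on ${\rm Jac}(V_2)$ over to $\mathcal{T}_1$, $\mathcal{T}_3$ --- so deriving the lemma from Proposition \ref{tr} plus the genus-$3$ statements inverts the logical order and, in the step you dismiss as ``bookkeeping,'' quietly assumes a degenerate-sigma-function statement that is at least as hard as the lemma itself. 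Stick with the direct chain-rule computation.
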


\begin{proof}Set $h\in\mathcal{F}({\rm Jac}(V_2))$ and $w=I_2((P_1,P_2))$. We have
\begin{gather*}
\mathcal{L}_1^{(2)}\circ I_2^*(h)=\mathcal{L}_1^{(2)}(h(w))=\frac{1}{X_1-X_2}\{-2Y_1({\rm d}_{P_1}(h(w))/{\rm d}X_1)+2Y_2({\rm d}_{P_2}(h(w))/{\rm d}X_2)\} \\
\qquad {} =\frac{1}{X_1-X_2}\left\{-2Y_1\left(-\frac{X_1}{2Y_1}h_1(w)-\frac{1}{2Y_1}h_3(w)\right) +2Y_2\left(-\frac{X_2}{2Y_2}h_1(w)-\frac{1}{2Y_2}h_3(w)\right)\right\} \\
\qquad {} =h_1(w)=I_2^*\circ L_1^{(2)}(h),
\end{gather*}
where $h_i=\partial_{w_i}h$. The lemma's assertions for the operator $L_3^{(2)}$ are proved similarly.
\end{proof}

By the isomorphism
\begin{gather*}
I_2^* \colon \ \mathcal{F}({\rm Jac}(V_2))\simeq\mathcal{F}\big({\rm Sym}^2(V_2)\big),
\end{gather*}
the operators $L_1^{(2)}$ and $L_3^{(2)}$ transform into $\mathcal{L}_1^{(2)}$ and $\mathcal{L}_3^{(2)}$, respectively.
By the isomorphism
\begin{gather*}\psi_1\colon \ \mathcal{F}\big({\rm Sym}^2(V_2)\big)\simeq\mathcal{F}\big({\rm Sym}^2(V_{3,2})\big),\end{gather*}
the operators $\mathcal{L}_1^{(2)}$ and $\mathcal{L}_3^{(2)}$ transform into the operators $\mathcal{T}_1$ and $\mathcal{T}_3$, respectively.

\begin{Proposition}\label{dekdv}If $y_{12}=y_{14}=0$ and $v\neq0$, the system \eqref{first}, \eqref{second}, and \eqref{third} in Theorem~{\rm \ref{dkdv}} goes to the system of the KdV-hierarchy in {\rm \cite[Theorem 5.2]{B1}}.
\end{Proposition}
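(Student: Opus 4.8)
The plan is to specialise the system of Theorem~\ref{dkdv} at $y_{12}=y_{14}=0$ and push it through the isomorphisms set up above. Putting $y_{12}=y_{14}=0$ turns $Q_3(X)$ into $X^2\big(X^5+y_4X^3-y_6X^2+y_8X-y_{10}\big)$, so the genus-$3$ curve degenerates precisely to the curve $V_{3,2}$, and the coordinates $u_2,u_4,u_5,u_7$ of Proposition~\ref{26}, hence $u=4u_2$ and $v=2\big(u_4-u_2^2\big)$, become elements of $\mathcal{F}\big({\rm Sym}^2(V_{3,2})\big)$ on which $\mathcal{T}_1$ and $\mathcal{T}_3$ act. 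First I would substitute $y_{12}=y_{14}=0$ into \eqref{first}--\eqref{third}: since $v\neq 0$, cancelling the common factor $v^4$ collapses the system to
\begin{gather*}
u'''-4\dot u-6uu'=0,\qquad \dot u''-4u\dot u-2u'v=0,\qquad \dot u=v'
\end{gather*}
in $\mathcal{F}\big({\rm Sym}^2(V_{3,2})\big)$.

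Next I would transport these relations to ${\rm Jac}(V_2)$. Composing the lemma identifying $\mathcal{L}_i^{(2)}\circ I_2^*$ with $I_2^*\circ L_i^{(2)}$ with Proposition~\ref{tr} (which identifies $\mathcal{T}_i\circ\psi_1$ with $\psi_1\circ\mathcal{L}_i^{(2)}$) gives a field isomorphism $\Phi:=\psi_1\circ I_2^*\colon\mathcal{F}\big({\rm Jac}(V_2)\big)\to\mathcal{F}\big({\rm Sym}^2(V_{3,2})\big)$ satisfying $\Phi\circ\partial_{w_1}=\mathcal{T}_1\circ\Phi$ and $\Phi\circ\partial_{w_3}=\mathcal{T}_3\circ\Phi$. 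Using $\psi_1(u_2)=u_2$, $\psi_1(u_4)=u_4$ together with the classical Klein relations for $V_2$, namely $\wp_{1,1}\circ I_2=X_1+X_2=2u_2$ and $\wp_{1,3}\circ I_2=-X_1X_2=u_4-u_2^2$ (the sign being the one fixed by the normalisation of ${\rm d}u$), I would obtain $\Phi^{-1}(u)=2\wp_{1,1}(w_1,w_3)$ and $\Phi^{-1}(v)=2\wp_{1,3}(w_1,w_3)$.

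Applying $\Phi^{-1}$ to the three relations and writing $t_1=w_1$, $t_2=w_3$, each becomes an identity among derivatives of $\wp_{1,1}$ and $\wp_{1,3}$: the first becomes $\partial_1^3\wp_{1,1}=4\partial_3\wp_{1,1}+12\wp_{1,1}\partial_1\wp_{1,1}$, which for $U=2\wp_{1,1}$ is precisely the KdV-equation $U_{t_2}=\frac{1}{4}U_{t_1t_1t_1}-\frac{3}{2}UU_{t_1}$; the third becomes $\partial_3\wp_{1,1}=\partial_1\wp_{1,3}$, the tautological symmetry of $-\partial_1\partial_1\partial_3\log\sigma$; and the second becomes a further relation of the hierarchy among $\wp_{1,1}$ and $\wp_{1,3}$. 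It then remains to check term by term that the transported system coincides with the presentation of the KdV-hierarchy in \cite[Theorem~5.2]{B1}.

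I expect the work to be bookkeeping rather than conceptual. Two points need care. First, the parameter vector with $y_{12}=y_{14}=0$ lies on the discriminant $\Delta_3$, so the ``limit'' must be read algebraically: the relations \eqref{first}--\eqref{third} are polynomial in the $y_i$, so the substitution is legitimate, and it is exactly the structures $\mathcal{F}\big({\rm Sym}^2(V_{3,2})\big)$, $\mathcal{T}_1$, $\mathcal{T}_3$, $\psi_1$ already constructed that survive it. Second, all numerical normalisations --- the factors in $u=4u_2$ and $v=2\big(u_4-u_2^2\big)$, the constants in the Klein formulas, and the rescalings $t_1=w_1$, $t_2=w_3$ --- must be reconciled with the conventions of \cite[Theorem~5.2]{B1}, in particular pinning down the sign in $\Phi^{-1}(v)=2\wp_{1,3}$. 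No analytic input beyond the constructions above and Proposition~\ref{26} is required.
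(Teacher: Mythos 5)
Your opening step coincides with the paper's: set $y_{12}=y_{14}=0$, use $v\neq 0$ to cancel $v^4$, and reduce \eqref{first}--\eqref{third} to $u'''-4\dot u-6uu'=0$, $\dot u''-4u\dot u-2u'v=0$, $\dot u=v'$. But from there the paper's proof is a two-line formal manipulation, and your proposal both overshoots and undershoots it. It overshoots because the proposition is a statement about the \emph{form of the system of equations}, not about its solutions: transporting $u$ and $v$ to ${\rm Jac}(V_2)$ via $\psi_1$ and $I_2^*$ and identifying them with $2\wp_{1,1}$ and $2\wp_{1,3}$ shows that certain genus-2 Abelian functions satisfy the reduced equations, which is the content of the later sections (and of \eqref{rkdvh}), not of Proposition~\ref{dekdv}; none of that machinery is needed here, and showing that particular functions satisfy two systems does not show the systems coincide. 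It undershoots because the one step that actually carries the content of the proposition is deferred to an unspecified ``term by term check'': to match the presentation of \cite[Theorem~5.2]{B1} one must rewrite the first equation as $u'''=3\big(u^2\big)'+4\dot u$ and, after substituting $\dot u=v'$ into the second to get $v'''-4uv'-2u'v=0$, invoke equation \eqref{fourth}, $2\dot v=vu'-uv'$, to convert it into $v'''=3(uv)'-2\dot v$. Your proposal never uses \eqref{fourth} at all, so the identification with the cited form of the hierarchy cannot be completed along the route you describe; this is a genuine gap rather than mere bookkeeping.

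A secondary point: your concern about the parameter vector lying on the discriminant, and the consequent appeal to $\mathcal{F}\big({\rm Sym}^2(V_{3,2})\big)$, is unnecessary for this proposition. The equations \eqref{first}--\eqref{fourth} are being compared as differential relations in $u$, $v$ and their $\mathcal{T}_1$-, $\mathcal{T}_3$-derivatives with polynomial dependence on the $y_i$; substituting $y_{12}=y_{14}=0$ and dividing by $v^4$ is purely algebraic, which is exactly how the paper treats it.
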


\begin{proof}
If $y_{12}=y_{14}=0$ and $v\neq0$, the equation (\ref{first}) goes to
\begin{gather*}u'''-4\dot{u}-6uu'=0.\end{gather*}
Therefore we obtain
\begin{gather*}u'''=3(u^2)'+4\dot{u}.\end{gather*}
On the other hand, the equation (\ref{second}) goes to
\begin{gather*}\dot{u}''-4u\dot{u}-2u'v=0.\end{gather*}
From (\ref{third}), the above equation becomes
\begin{gather*}v'''-4uv'-2u'v=0.\end{gather*}
From (\ref{fourth}), we obtain
\begin{gather*}0=v'''-4uv'-2u'v=v'''-3uv'+2\dot{v}-vu'-2u'v=v'''-3uv'-3u'v+2\dot{v}.\end{gather*}
Thus we have
\begin{gather*}v'''=3(uv)'-2\dot{v}.\tag*{\qed}\end{gather*}\renewcommand{\qed}{}
\end{proof}

\section{Solution of the two parametric deformed KdV-hierarchy}

We assume $g=3$. The two parametric deformed KdV-hierarchy introduced in Theorem~\ref{dkdv} is integrated in functions of $\mathcal{F}((\sigma))$. Consider a constant vector $(y_4,y_6,y_8,y_{10},y_{12},y_{14})\in B_3$.
Take a point $w^{(0)}=\big(w_1^{(0)},w_3^{(0)},w_5^{(0)}\big)\in W$ such that $\sigma_i\big(w^{(0)}\big)\neq0$ for $i=1,5$. In a sufficiently small open neighborhood $U_2\subset\mathbb{C}^2$ of $\big(w_1^{(0)},w_3^{(0)}\big)$, there exists a uniquely determined holomorphic function $\xi(w_1,w_3)$ on $U_2$ such that $\xi\big(w_1^{(0)},w_3^{(0)}\big)=w_5^{(0)}$, $(w_1,w_3,\xi(w_1,w_3))\in W$ for any point $(w_1,w_3)\in U_2$, and $\sigma_i(w_1,w_3,\xi(w_1,w_3))\neq0$ for any point $(w_1,w_3)\in U_2$ and $i=1,5$.
\begin{Lemma}\label{36} For any $F\in\mathcal{F}$, we have
\begin{gather*}\frac{\partial}{\partial w_1}F(w_1,w_3,\xi(w_1,w_3))=T_1(F),\qquad \frac{\partial}{\partial w_3}F(w_1,w_3,\xi(w_1,w_3))=T_3(F).\end{gather*}
\end{Lemma}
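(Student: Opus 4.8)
The plan is to recognize that the function $\xi(w_1,w_3)$ was constructed precisely so that the map $(w_1,w_3)\mapsto(w_1,w_3,\xi(w_1,w_3))$ parametrizes (locally) the sigma divisor $W$, and that the operators $T_1$ and $T_3$ are by definition the directional derivatives tangent to $W$ along the $w_1$- and $w_3$-directions after eliminating $w_5$. So the statement is essentially the chain rule. First I would differentiate the identity $\sigma(w_1,w_3,\xi(w_1,w_3))\equiv 0$ on $U_2$ with respect to $w_1$, obtaining $\sigma_1(w_1,w_3,\xi)+\sigma_5(w_1,w_3,\xi)\,\partial\xi/\partial w_1=0$, hence $\partial\xi/\partial w_1=-\sigma_1/\sigma_5$ evaluated at $(w_1,w_3,\xi(w_1,w_3))$; this is legitimate since $\sigma_5(w_1,w_3,\xi)\neq 0$ on $U_2$ by the choice of $w^{(0)}$ and $U_2$. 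Similarly $\partial\xi/\partial w_3=-\sigma_3/\sigma_5$ at $(w_1,w_3,\xi(w_1,w_3))$.

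Next, for an arbitrary $F\in\mathcal{F}$, I would apply the chain rule to the composite $F(w_1,w_3,\xi(w_1,w_3))$:
\begin{gather*}
\frac{\partial}{\partial w_1}F(w_1,w_3,\xi(w_1,w_3))=(\partial_1 F)+(\partial_5 F)\frac{\partial\xi}{\partial w_1}=\Bigl(\partial_1 F-\frac{\sigma_1}{\sigma_5}\partial_5 F\Bigr)\Big|_{(w_1,w_3,\xi(w_1,w_3))},
\end{gather*}
and the expression in parentheses is exactly $T_1(F)$ by the definition $T_1=\partial_1-(\sigma_1/\sigma_5)\partial_5$ given at the start of Section~5. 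The same computation with $w_1$ replaced by $w_3$ gives $\partial/\partial w_3\,F(w_1,w_3,\xi)= \bigl(\partial_3 F-(\sigma_3/\sigma_5)\partial_5 F\bigr)|_{(w_1,w_3,\xi)}=T_3(F)$, using $T_3=\partial_3-(\sigma_3/\sigma_5)\partial_5$. One small point to record is that the right-hand sides $T_1(F)$ and $T_3(F)$ are to be understood as evaluated along the slice $w_5=\xi(w_1,w_3)$, which is consistent with how these operators act on functions restricted to $W$; since $\sigma_1,\sigma_5$ are nonvanishing there, $T_1(F)$ and $T_3(F)$ are well-defined meromorphic expressions on $U_2$.

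There is no real obstacle here: the only thing to be careful about is the domain bookkeeping — that $\sigma_5$ does not vanish on the relevant neighborhood so that $\partial\xi/\partial w_i$ is a genuine holomorphic (resp. meromorphic) function and the implicit function theorem applies — but this has already been arranged in the setup immediately preceding the lemma. So the proof is just the two chain-rule computations above together with the identification of the coefficient $-\sigma_i/\sigma_5$ as the partial derivative of $\xi$.
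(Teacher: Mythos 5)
Your proof is correct and follows essentially the same route as the paper: differentiate the identity $\sigma(w_1,w_3,\xi(w_1,w_3))\equiv 0$ to get $\partial\xi/\partial w_i=-\sigma_i/\sigma_5$, then apply the chain rule to $F(w_1,w_3,\xi(w_1,w_3))$ and identify the result with $T_1(F)$ and $T_3(F)$. The paper's proof is just a terser version of the same two-step argument.
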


\begin{proof}According to the definition of the function $\xi$, we have
\begin{gather*}
\frac{\partial \xi}{\partial w_1}=-\frac{\sigma_1}{\sigma_5}(w_1,w_3,\xi(w_1,w_3)),\qquad \frac{\partial \xi}{\partial w_3}=-\frac{\sigma_3}{\sigma_5}(w_1,w_3,\xi(w_1,w_3)).
\end{gather*}
Therefore
\begin{gather*}
\frac{\partial}{\partial w_1}F(w_1,w_3,\xi(w_1,w_3))=\partial_1F-\frac{\sigma_1}{\sigma_5}(\partial_5F)=T_1(F),\\
\frac{\partial}{\partial w_3}F(w_1,w_3,\xi(w_1,w_3))=\partial_3F-\frac{\sigma_3}{\sigma_5}(\partial_5F)=T_3(F).\tag*{\qed}
\end{gather*}\renewcommand{\qed}{}
\end{proof}

We set $U(x,t)=4F_2(x,t,\xi(x,t))$ and $V(x,t)=2\big\{F_4(x,t,\xi(x,t))-F_2(x,t,\xi(x,t))^2\big\}$. For a~function $K(x,t)$, we use the notation $K'=\partial_x K,\;\dot{K}=\partial_t K$.

\begin{Theorem}\label{dkdvs}The functions $U$ and $V$ satisfy the two parametric deformed KdV-hierarchy
\begin{gather*}
V^4(U'''-4\dot{U}-6UU')-32y_{12}V\dot{U}+32y_{14}(VU'-3U\dot{U})=0,\\
V^4(\dot{U}''-4U\dot{U}-2U'V)-32y_{12}V\dot{V}+32y_{14}(VV'-3U\dot{V})=0,\\
\dot{U}=V',\\
2\dot{V}=VU'-UV'.
\end{gather*}
\end{Theorem}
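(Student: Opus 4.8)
The plan is to transport the already-established identities from Theorem~\ref{dkdv} along the map $\overline{I_3^*}$ and the parametrization $\xi$. The key point is that the functions $U$ and $V$ are, by construction, the pullbacks under $(w_1,w_3)\mapsto(w_1,w_3,\xi(w_1,w_3))$ of the functions $4F_2$ and $2(F_4-F_2^2)$ on the sigma divisor, and these correspond via $\overline{I_3^*}$ to $u=4u_2$ and $v=2(u_4-u_2^2)$ on ${\rm Sym}^2(V_3)$. So the whole argument is a compatibility chain: (i)~Theorem~\ref{dkdv} gives polynomial identities in $u,v$ and their $\mathcal{T}_1,\mathcal{T}_3$-derivatives that hold in the field $\mathcal{F}\big({\rm Sym}^2(V_3)\big)$; (ii)~Lemma~\ref{17} says $\overline{I_3^*}$ intertwines $\mathcal{T}_1,\mathcal{T}_3$ with $T_1,T_3$, so the same identities hold in $\mathcal{F}((\sigma))$ with $\mathcal{T}_i$ replaced by $T_i$ and $u_2,u_4$ replaced by (the classes of) $F_2,F_4$; (iii)~Lemma~\ref{36} says that applying $T_1$ (resp.\ $T_3$) to a meromorphic function and then restricting to the surface $w_5=\xi(w_1,w_3)$ is the same as first restricting and then applying $\partial_x$ (resp.\ $\partial_t$). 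Composing these, the polynomial identities descend to honest differential identities in $U,V$ and their $x,t$-derivatives, which are exactly the four displayed equations.

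Concretely, I would first record that $I_3^*(F_i)=u_i$ for $i=2,4$ (stated in the excerpt after the definition of the $F_i$), hence $\overline{I_3^*}$ sends the class of $4F_2$ to $4u_2=u$ and the class of $2(F_4-F_2^2)$ to $2(u_4-u_2^2)=v$. Next, since $\overline{I_3^*}$ is a field isomorphism and intertwines $T_1\leftrightarrow\mathcal{T}_1$, $T_3\leftrightarrow\mathcal{T}_3$ by Lemma~\ref{17}, every one of equations~\eqref{first}--\eqref{fourth}, being a polynomial identity in $u,v$ and their iterated $\mathcal{T}_1,\mathcal{T}_3$-derivatives that holds in $\mathcal{F}\big({\rm Sym}^2(V_3)\big)$, pulls back to the identical polynomial identity in the classes of $4F_2$, $2(F_4-F_2^2)$ and their iterated $T_1,T_3$-derivatives, now holding in $\mathcal{F}((\sigma))$. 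Since this is an identity between elements of $\mathcal{F}[(\sigma)]/J^*$, it lifts to an identity in $\mathcal{F}$ valid on $W$, in particular valid at every point of the form $(w_1,w_3,\xi(w_1,w_3))$ with $(w_1,w_3)\in U_2$.

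Finally I would apply Lemma~\ref{36} repeatedly. It gives $\partial_x\big(F(w_1,w_3,\xi)\big)=(T_1 F)(w_1,w_3,\xi)$ and likewise for $\partial_t$ and $T_3$, for every $F\in\mathcal{F}$; iterating, any monomial $T_1^a T_3^b F$ evaluated on the surface equals $\partial_x^a\partial_t^b$ of the restriction of $F$. Applying this to $F=4F_2$ and $F=2(F_4-F_2^2)$ and using $U=4F_2(x,t,\xi)$, $V=2(F_4-F_2^2)(x,t,\xi)$, every occurrence of a $T_1,T_3$-derivative of the classes of $4F_2,2(F_4-F_2^2)$ in the identities from the previous paragraph turns into the corresponding $\partial_x,\partial_t$-derivative of $U$ or $V$. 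Since the parameters $y_{12},y_{14}$ appear only as constants, the four identities of Theorem~\ref{dkdv} become verbatim the four equations in the statement, which completes the proof.

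I do not expect a serious obstacle here; the only points requiring a little care are the bookkeeping of which logarithmic-derivative combinations $F_2,F_4$ map to $u_2,u_4$ (so that $u=4F_2\mapsto 4u_2$ and $v=2(F_4-F_2^2)\mapsto 2(u_4-u_2^2)$ match the definitions of $u,v$ used in Theorem~\ref{dkdv}), and the fact that $F_2,F_4\in\mathcal{F}[(\sigma)]$ together with $\sigma_1,\sigma_5$ being nonvanishing on $U_2$ guarantees all the restrictions $F_2(x,t,\xi)$, $F_4(x,t,\xi)$ and their derivatives are well-defined holomorphic functions on $U_2$, so that passing from identities in $\mathcal{F}((\sigma))$ to pointwise identities on the $\xi$-surface is legitimate. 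Thus the ``hard part'' is essentially already done in Theorem~\ref{dkdv}; Theorem~\ref{dkdvs} is its transcription to the sigma-divisor side via Lemmas~\ref{17} and~\ref{36}.
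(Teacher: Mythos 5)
Your proposal is correct and follows exactly the route of the paper, whose proof is the one-line citation ``From Lemmas \ref{17}, \ref{36}, and Theorem \ref{dkdv}, we obtain the theorem''; you have simply spelled out the compatibility chain (transport of the identities of Theorem~\ref{dkdv} along $\overline{I_3^*}$ via Lemma~\ref{17}, then conversion of $T_1,T_3$ into $\partial_x,\partial_t$ on the $\xi$-surface via Lemma~\ref{36}) that the authors leave implicit. No gaps.
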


\begin{proof}From Lemmas \ref{17}, \ref{36}, and Theorem \ref{dkdv}, we obtain the theorem.
\end{proof}

\section{The rational limit}\label{section8}

Let the constant vector $(y_4,\dots,y_{14})\in\mathbb{C}^6$ tend to zero. Then, according to Theorem~\ref{rationallim}, the sigma function $\sigma(w_1,w_3,w_5)$ transforms into the Schur--Weierstrass polynomial (see~\cite{BEL-99-R})
\begin{gather*}
\sigma=w_1w_5-w_3^2-\frac{1}{3}w_1^3w_3+\frac{1}{45}w_1^6.
\end{gather*}
As a result, we obtain
\begin{gather*}
\sigma_1=w_5-w_1^2w_3+\frac{2}{15}w_1^5,\qquad \sigma_3=-2w_3-\frac{1}{3}w_1^3,\qquad \sigma_5=w_1,\\
\sigma_{11}=-2w_1w_3+\frac{2}{3}w_1^4,\qquad \sigma_{13}=-w_1^2,\qquad \sigma_{15}=1, \qquad \sigma_{33}=-2,\qquad \sigma_{35}=0.
\end{gather*}
Take a point $w^{(0)}=\big(w_1^{(0)},w_3^{(0)},w_5^{(0)}\big)\in W$ such that $\sigma_i\big(w^{(0)}\big)\neq0$ for $i=1,5$. In a sufficiently small open neighborhood $U_2\subset\mathbb{C}^2$ of $\big(w_1^{(0)},w_3^{(0)}\big)$, there exists a uniquely determined holomorphic function $\xi(w_1,w_3)$ on $U_2$ such that $\xi\big(w_1^{(0)},w_3^{(0)}\big)=w_5^{(0)}$, $\big(w_1,w_3,\xi(w_1,w_3)\big)\in W$ for any point $(w_1,w_3)\in U_2,$ and $\sigma_i(w_1,w_3,\xi(w_1,w_3))\neq0$ for any point $(w_1,w_3)\in U_2$ and $i=1,5$. On $U_2$ the function $\xi(w_1,w_3)$ is expressed as
\begin{gather*}
\xi(w_1,w_3)=\frac{w_3^2}{w_1}+\frac{1}{3}w_1^2w_3-\frac{1}{45}w_1^5.
\end{gather*}
We have
\begin{gather*}
U(x,t)=4F_2(x,t,\xi(x,t))=-2\frac{\sigma_3(x,t,\xi(x,t))}{\sigma_1(x,t,\xi(x,t))}=\frac{6x\big(x^3+6t\big)}{\big(x^3-3t\big)^2},\\
V(x,t)=2\big\{F_4(x,t,\xi(x,t))-F_2(x,t,\xi(x,t))^2\big\}=-2\frac{\sigma_5(x,t,\xi(x,t))}{\sigma_1(x,t,\xi(x,t))}=-\frac{18x^2}{\big(x^3-3t\big)^2}.
\end{gather*}

\begin{Theorem}The function $U(x,t)$ is a solution of the KdV-hierarchy
\begin{gather}
U'''-4\dot{U}-6UU'=0,\label{kdvequation} \\
\dot{U}''-4U\dot{U}-2U'V=0,\qquad \dot{U}=V'.\nonumber
\end{gather}
\end{Theorem}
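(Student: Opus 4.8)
The plan is to verify directly that the explicit rational functions
$U(x,t)=6x(x^3+6t)/(x^3-3t)^2$ and $V(x,t)=-18x^2/(x^3-3t)^2$ satisfy the three equations, using the structure already established in the paper rather than brute differentiation from scratch. The cleanest route is to observe that the constant vector $(y_4,\dots,y_{14})\to 0$ is the rational limit, and in particular $y_{12}=y_{14}=0$ there; so by Theorem~\ref{dkdvs} the pair $(U,V)$ automatically satisfies the full two parametric deformed KdV-hierarchy of that theorem, and by Proposition~\ref{dekdv} (applied with $y_{12}=y_{14}=0$) equations~\eqref{first}--\eqref{third} collapse exactly to $U'''-4\dot U-6UU'=0$ and $\dot U''-4U\dot U-2U'V=0$ together with $\dot U=V'$, provided $V\not\equiv 0$. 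Since $V=-18x^2/(x^3-3t)^2$ is not identically zero on the relevant neighbourhood, this gives the statement almost immediately.

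First I would record that in the rational limit the functions $F_2,F_4$ restricted to the divisor become, via the displayed expressions for $\sigma_1,\sigma_3,\sigma_5$ and for $\xi(w_1,w_3)=w_3^2/w_1+\tfrac13 w_1^2 w_3-\tfrac1{45}w_1^5$, precisely
$U=-2\sigma_3/\sigma_1$ and $V=-2\sigma_5/\sigma_1$ evaluated at $(x,t,\xi(x,t))$, yielding the stated closed forms after simplifying $\sigma_1(x,t,\xi(x,t))= -(x^3-3t)^2/(9x)$ and $\sigma_3(x,t,\xi(x,t))=-(x^3+6t)/3$ and $\sigma_5=x$. This identification is exactly the specialization of the construction preceding Theorem~\ref{dkdvs}, so no new argument is needed beyond substituting the Schur--Weierstrass data.

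Then I would invoke Theorem~\ref{dkdvs}: for the constant vector $(y_4,\dots,y_{14})$ in $B_3$ the pair $(U,V)=(4F_2,2(F_4-F_2^2))$ composed with $\xi$ solves the two parametric deformed KdV-hierarchy. Taking the limit $(y_4,\dots,y_{14})\to 0$ (which lies in the closure; one checks $Q_3(X)=X^7$ has multiple roots, so strictly speaking one should argue by continuity of both sides of the PDEs as polynomial identities in the $y_i$, or simply re-derive \eqref{seconddif} and the two derived equations with all $y_i=0$, which is a harmless specialization of formulas already proved), the deformation terms proportional to $y_{12}$ and $y_{14}$ vanish. Applying Proposition~\ref{dekdv} with $y_{12}=y_{14}=0$ and $V\neq 0$ then yields $U'''-4\dot U-6UU'=0$, $\dot U''-4U\dot U-2U'V=0$, and $\dot U=V'$, which is the assertion.

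The only genuinely delicate point is the passage to the limit $(y_4,\dots,y_{14})=0$, since this vector does not lie in $B_3$ and so Theorem~\ref{dkdvs} does not apply verbatim there. The safe way around this obstacle is to note that \eqref{seconddif} and the two equations obtained from it by applying $\mathcal T_1$ and $\mathcal T_3$ are polynomial (in the $y_i$) identities in $\mathcal F({\rm Sym}^2(V_3))$, and that the substitution $U=4u_2$, $V=2(u_4-u_2^2)$ with $u_2=-\tfrac12\sigma_3/\sigma_1$, $u_4=\sigma_5/\sigma_1+\tfrac14(\sigma_3/\sigma_1)^2$ makes sense for the Schur--Weierstrass $\sigma$; hence one may simply verify \eqref{kdvequation} and the remaining two equations by a direct (if slightly tedious) differentiation of the explicit rational functions $U,V$, using the chain rule through $\xi(x,t)$ and the relations $\partial_x\xi=-\sigma_1/\sigma_5$, $\partial_t\xi=-\sigma_3/\sigma_5$ from Lemma~\ref{36}. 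I expect this verification to reduce, after clearing the common denominator $(x^3-3t)^k$, to an identity between two polynomials in $x$ and $t$ that holds identically, and that is the step where the bulk of the computational work lies.
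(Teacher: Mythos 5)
Your proposal is correct and follows essentially the same route as the paper, whose entire proof is the single sentence that the theorem follows from Theorem~\ref{dkdvs}; you merely make explicit the intermediate use of Proposition~\ref{dekdv} with $y_{12}=y_{14}=0$ and $V\neq0$. Your additional observation that $(0,\dots,0)\notin B_3$, so that Theorem~\ref{dkdvs} does not apply verbatim in the rational limit and one must argue either by polynomiality of the identities in the $y_i$ or by direct verification of the explicit rational functions, addresses a point the paper leaves implicit and is a refinement of, not a departure from, its argument.
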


\begin{proof}This theorem follows from Theorem \ref{dkdvs}.
\end{proof}

Let us consider the curve $V_2$ of genus 2. It is well known that the function $D(x,t)=2\wp_{1,1}(x,t)$ is a solution of the KdV-hierarchy (see \cite[Theorems~5.1 and~5.2]{B1}, \cite[Theorem~3.6]{B2}, \cite[Theorem~6]{M1})
\begin{gather}
D'''-4\dot{D}-6DD'=0,\qquad \dot{D}''-4D\dot{D}-2D'E=0,\qquad \dot{D}=E'.\label{rkdvh}
\end{gather}
where $E(x,t)=2\wp_{1,3}(x,t)$. Let the constant vector $(y_4,y_6,y_8,y_{10})\in\mathbb{C}^4$ tends to zero. Then we have
\begin{gather*}
\sigma=-w_3+\frac{1}{3}w_1^3,\qquad \sigma_1=w_1^2,\qquad \sigma_3=-1,\qquad \sigma_{11}=2w_1,\qquad \sigma_{13}=0,\\
D(x,t)=2\frac{\sigma_1^2-\sigma_{11}\sigma}{\sigma^2}=\frac{6x\big(x^3+6t\big)}{\big(x^3-3t\big)^2},\qquad E(x,t)=2\frac{\sigma_1\sigma_3-\sigma_{13}\sigma}{\sigma^2}=-\frac{18x^2}{\big(x^3-3t\big)^2},
\end{gather*}
which is a solution of the KdV-hierarchy (\ref{rkdvh}). Note that $U(x,t)=D(x,t)$ and $V(x,t)=E(x,t)$ in the rational limit.

\begin{Remark}The Lax form of the KdV-equation (\ref{kdvequation}) is
\begin{gather*}\frac{{\rm d} L}{{\rm d}t}=[A,L],\end{gather*}
where
\begin{gather*}L=-\partial_x^2+U,\qquad A=\partial_x^3-\frac{3}{2}U\partial_x-\frac{3}{4}U_x.\end{gather*}
The Schr\"odinger equation with the potential $U(x,t)=6x\big(x^3+6t\big)/\big(x^3-3t\big)^2$ is
\begin{gather}
-\frac{{\rm d}\eta}{{\rm d}x^2}+U(x,t)\eta(x,t)=\lambda\eta(x,t),\label{schure}
\end{gather}
where $\eta(x,t)$ is an unknown function and $\lambda\in\mathbb{C}$. At each fixed time $t$, the function $U(x,t)=6x\big(x^3+6t\big)/\big(x^3-3t\big)^2$ decreases as $O\big(1/x^2\big)$ for $x \to \infty$. Thus, we obtain a~meaningful solution of the corresponding Schr\"odinger equation~(\ref{schure}) in the point of view of physics.
\end{Remark}

\appendix

\section{Correction of Section 8 in \cite{AB}}\label{appendixA}

In \cite[Section 8]{AB}, we derived the solution of the dynamical systems introduced in~\cite{BM} in the case of $(y_4,\dots,y_{14})=(0,\dots,0)$ for $g=3$. Unfortunately, there are miscalculations in the expressions of $F_5$, $F_7$ and Examples~1 and~3. In this appendix we correct the errors.

We assume $g=3$ and consider the curve $V_3$. Let the constant vector $(y_4,\dots,y_{14})\in\mathbb{C}^6$ tend to zero. Then the sigma function $\sigma(w_1,w_3,w_5)$ transforms into the Schur--Weierstrass polynomial as Section~\ref{section8}. Take a point $w^{(0)}=\big(w_1^{(0)},w_3^{(0)},w_5^{(0)}\big)\in W$ such that $\sigma_1(w^{(0)})\neq0$. In a~sufficiently small open neighborhood $U_3\subset\mathbb{C}^2$ of $\big(w_3^{(0)},w_5^{(0)}\big)$, there exists a uniquely determined holomorphic function $\varphi(w_3,w_5)$ on $U_3$ such that $\varphi\big(w_3^{(0)},w_5^{(0)}\big)=w_1^{(0)}$, $(\varphi(w_3,w_5),w_3,w_5)\in W$ for any point $(w_3,w_5)\in U_3,$ and $\sigma_1(\varphi(w_3,w_5),w_3,w_5)\neq0$ for any point $(w_3,w_5)\in U_3$. We have the relation
\begin{gather}
\varphi(w_3,w_5)^6=15\varphi(w_3,w_5)^3w_3+45w_3^2-45\varphi(w_3,w_5)w_5.\label{rel}
\end{gather}
The definitions of $F_2$, $F_4$, $F_5$, $F_7$ (see Section~\ref{section4}) and the relation~(\ref{rel}) imply\footnote{We used the computer algebra system Maxima for calculation.}
\begin{gather*}
F_2(\varphi(w_3,w_5),w_3,w_5)=5\big(\varphi^3+6w_3\big)/K_2, \\
F_4(\varphi(w_3,w_5),w_3,w_5)=15\big({-}\varphi^3w_3+15\varphi w_5-15w_3^2\big)/K_4, \\
F_5(\varphi(w_3,w_5),w_3,w_5)=\big({-}15 {{{w_5}}^{2}}-195 {{{\varphi}}^{2}} {w_3} {w_5}+8 {{{\varphi}}^{5}} {w_5}+135 {\varphi} {{{w_3}}^{3}}+63 {{{\varphi}}^{4}} {{{w_3}}^{2}}\big)/K_5, \\
F_7(\varphi(w_3,w_5),w_3,w_5)=-15 {\varphi}\big( 25 {{{\varphi}}^{2}} {{{w_5}}^{2}}\!-45 {\varphi} {{{w_3}}^{2}} {w_5}-15 {{{\varphi}}^{4}} {w_3} {w_5}+27 {{{w_3}}^{4}}\!+18 {{{\varphi}}^{3}} {{{w_3}}^{3}}\big)\!/K_7,
\end{gather*}
where
\begin{gather*}
K_2=2\big(2\varphi^5-15\varphi^2w_3+15w_5\big),\qquad K_4=4\big({-}8\varphi^5w_5+27\varphi^4w_3^2-30\varphi^2w_3w_5+15w_5^2\big),\nonumber\\
K_5=3\big(5 {{{w_5}}^{3}}+165 {{{\varphi}}^{2}} {w_3} {{{w_5}}^{2}}+14 {{{\varphi}}^{5}} {{{w_5}}^{2}}-585 {\varphi} {{{w_3}}^{3}} {w_5}-111 {{{\varphi}}^{4}} {{{w_3}}^{2}} {w_5}\\
\hphantom{K_5=}{} +405 {{{w_3}}^{5}}+189 {{{\varphi}}^{3}} {{{w_3}}^{4}}\big), \\
K_7=2\big(15 {{{w_5}}^{4}}-4380 {{{\varphi}}^{2}} {w_3} {{{w_5}}^{3}}-208 {{{\varphi}}^{5}} {{{w_5}}^{3}}+28620 {\varphi} {{{w_3}}^{3}} {{{w_5}}^{2}}+3042 {{{\varphi}}^{4}} {{{w_3}}^{2}} {{{w_5}}^{2}} \\
\hphantom{K_7=}{}-24300 {{{w_3}}^{5}} {w_5}-11583 {{{\varphi}}^{3}} {{{w_3}}^{4}} {w_5}+2187 {{{\varphi}}^{2}} {{{w_3}}^{6}}+729 {{{\varphi}}^{5}} {{{w_3}}^{5}}\big).
\end{gather*}
The dynamical systems introduced in \cite{BM} for $g=3$ and $y_4=y_6=y_8=y_{10}=0$ are as follows
\begin{alignat*}{3}
& (I) \quad && \partial_tG_2=-G_5,\qquad \partial_t G_4=-2G_7,\qquad \partial_tG_5=-35G_2^4-42G_2^2G_4-3G_4^2,&\\
&&& \partial_tG_7=-7\big(3G_2^5+10G_2^3G_4+3G_2G_4^2\big),& \\
& (II) \quad && \partial_{\tau} G_2=G_2G_5-G_7,\qquad \partial_{\tau} G_4=2(G_2G_7-G_4G_5),& \\
&&& \partial_{\tau}G_5=G_5^2+14G_2^5-28G_2^3G_4-18G_2G_4^2,& \\
&&& \partial_{\tau}G_7=-G_5G_7+21G_2^6+35G_2^4G_4-21G_2^2G_4^2-3G_4^3.&
\end{alignat*}

\begin{Theorem}[{\cite[Theorem 8.1]{AB}}]\label{ratc} The set of functions $(G_2,G_4,G_5,G_7)$, where
\begin{gather*}
G_i(t,\tau)=F_i(\varphi(t,\tau),t,\tau),\qquad i=2,4,5,7,
\end{gather*}
is a solution of the dynamical systems {\rm (I)} and {\rm (II)}.
\end{Theorem}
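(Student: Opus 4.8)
The plan is to transport the polynomial systems of Theorem~\ref{BMsystem}, specialised to $y_4=y_6=y_8=y_{10}=0$, first along the field isomorphism $\overline{I_3^*}$ to the functions $F_i$ and then along the local parametrisation $(w_3,w_5)\mapsto(\varphi(w_3,w_5),w_3,w_5)$ of the sigma divisor. Throughout put $(t,\tau)=(w_3,w_5)$.

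First I would prove the analogue of Lemma~\ref{36} adapted to $\varphi$: for every $F\in\mathcal{F}$ holomorphic near $w^{(0)}$,
\begin{gather*}
\frac{\partial}{\partial w_3}F(\varphi(w_3,w_5),w_3,w_5)=\big(L_3^{(3)}F\big)(\varphi(w_3,w_5),w_3,w_5),\\
\frac{\partial}{\partial w_5}F(\varphi(w_3,w_5),w_3,w_5)=\big(L_5^{(3)}F\big)(\varphi(w_3,w_5),w_3,w_5).
\end{gather*}
Indeed, differentiating $\sigma(\varphi(w_3,w_5),w_3,w_5)\equiv 0$ and using $\sigma_1(\varphi(w_3,w_5),w_3,w_5)\neq 0$ gives $\partial\varphi/\partial w_3=-(\sigma_3/\sigma_1)(\varphi,w_3,w_5)$ and $\partial\varphi/\partial w_5=-(\sigma_5/\sigma_1)(\varphi,w_3,w_5)$, and the claim follows by the chain rule together with the definitions $L_3^{(3)}=\partial_3-(\sigma_3/\sigma_1)\partial_1$, $L_5^{(3)}=\partial_5-(\sigma_5/\sigma_1)\partial_1$. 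Since $F_2,F_4,F_5,F_7$ and the operators $L_3^{(3)},L_5^{(3)}$ carry only powers of $\sigma_1$ in their denominators, $\sigma_1(w^{(0)})\neq 0$, and $\varphi$ is holomorphic on $U_3$, every function met below is holomorphic on a neighbourhood of $\big(w_3^{(0)},w_5^{(0)}\big)$, so all these restrictions are legitimate.

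Then I would push the Buchstaber--Mikhailov systems to $\mathcal{F}((\sigma))$. By Lemma~\ref{16} the isomorphism $\overline{I_3^*}$ intertwines $L_3^{(3)}$ with $\mathcal{L}_3^{(3)}$ and $L_5^{(3)}$ with $\mathcal{L}_5^{(3)}$, and $\overline{I_3^*}(F_i)=u_i$ for $i=2,4,5,7$. Hence Theorem~\ref{BMsystem} with $y_4=y_6=y_8=y_{10}=0$ becomes, in the field $\mathcal{F}((\sigma))$, the identities $L_3^{(3)}F_i=P_i(F_2,F_4,F_5,F_7)$ and $L_5^{(3)}F_i=Q_i(F_2,F_4,F_5,F_7)$, where $P_i$, $Q_i$ denote the right-hand sides of the rational-case systems (I) and (II) displayed just before the theorem. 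Equality in $\mathcal{F}((\sigma))$ means that, as meromorphic functions on a neighbourhood of $w^{(0)}$ in $\mathbb{C}^3$, the two sides agree identically on $W$.

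Finally I would combine the two steps. With $G_i(t,\tau)=F_i(\varphi(t,\tau),t,\tau)$, the first step gives $\partial_tG_i=\big(L_3^{(3)}F_i\big)(\varphi(t,\tau),t,\tau)$ and $\partial_\tau G_i=\big(L_5^{(3)}F_i\big)(\varphi(t,\tau),t,\tau)$; since $(\varphi(t,\tau),t,\tau)\in W$ for all $(t,\tau)\in U_3$, the second step lets us replace $L_3^{(3)}F_i$ by $P_i(F_2,\dots,F_7)$ and $L_5^{(3)}F_i$ by $Q_i(F_2,\dots,F_7)$ in these evaluations, and $F_j(\varphi(t,\tau),t,\tau)=G_j(t,\tau)$ then yields exactly systems (I) and (II) for $(G_2,G_4,G_5,G_7)$. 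I expect the only genuinely delicate point to be making this three-fold identification precise --- local $(w_3,w_5)$-derivatives on $(\sigma)$ $\leftrightarrow$ $L_3^{(3)},L_5^{(3)}$ on $\mathcal{F}((\sigma))$ $\leftrightarrow$ $\mathcal{L}_3^{(3)},\mathcal{L}_5^{(3)}$ on $\mathcal{F}\big({\rm Sym}^2(V_3)\big)$ --- together with checking holomorphy of all restrictions at $w^{(0)}$; once this is set up the conclusion is formal. By contrast, the bookkeeping-heavy part, which is where the computation in \cite[Section~8]{AB} went astray, is the derivation of the explicit rational expressions for $F_i(\varphi(w_3,w_5),w_3,w_5)$ via elimination of $\varphi$ by means of \eqref{rel}; that is needed only for the Examples below, not for the theorem, and the structural route above sidesteps it entirely. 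A self-contained alternative would be to verify the eight scalar relations directly from those explicit formulas modulo \eqref{rel}, but that is precisely the error-prone calculation we prefer to avoid.
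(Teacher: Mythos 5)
The paper does not actually reprove this statement---it is quoted from \cite[Theorem~8.1]{AB}, and the appendix only corrects the explicit expressions for $F_5$, $F_7$ and Examples~1 and~3---so your argument has to stand on its own. Its skeleton (chain rule along the graph of $\varphi$ to identify $\partial_{w_3},\partial_{w_5}$ with $L_3^{(3)},L_5^{(3)}$, then transport of Theorem~\ref{BMsystem} through $\overline{I_3^*}$) is exactly the mechanism the paper uses for Theorem~\ref{dkdvs}, and your first and third steps are sound.

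There is, however, a genuine gap in the second step. The theorem is set at $(y_4,\dots,y_{14})=(0,\dots,0)$, where $Q_3(X)=X^7$ has a multiple root, so this parameter value lies in $\Delta_3$ and not in $B_3$. For this degenerate value the curve is singular: there is no period lattice $\Lambda_3$, no Abel--Jacobi map $I_3$, hence no isomorphism $\overline{I_3^*}$ and no field $\mathcal{F}((\sigma))$ in the sense of Section~\ref{section4}; Lemma~\ref{16} and the identity $I_3^*(F_i)=u_i$ are simply not available here. So the key identities $L_3^{(3)}F_i\equiv P_i(F_2,F_4,F_5,F_7)$ and $L_5^{(3)}F_i\equiv Q_i(F_2,F_4,F_5,F_7)$ on $W$ cannot be obtained the way you obtain them. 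You must either (i) add a degeneration argument: for $y\in B_3$ these identities, after clearing the $\sigma_1$-denominators, assert that an entire function built polynomially from the partial derivatives of $\sigma$ and from $y$ vanishes on $\{\sigma=0\}$; since by Theorem~\ref{rationallim} the Taylor coefficients of $\sigma$ are polynomials in $y$, and since $B_3$ is dense in $\mathbb{C}^6$, one can pass to the limit $y\to0$ at any point of the Schur--Weierstrass divisor with $\sigma_1\neq0$ by producing nearby zeros of the deformed $\sigma$ via the implicit function theorem with parameters; or (ii) carry out the direct verification modulo \eqref{rel} from the corrected explicit formulas---precisely the computation you set out to avoid, and the one the appendix's formulas are designed to support.
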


\noindent{\bf Example 1.} Let $w^{(0)}=(0,0,1)$. Then $w^{(0)}\in W$ and $\sigma_1\big(w^{(0)}\big)\neq0$. The function $\varphi(t,1)$ of $t$ has the following expansion in a neighborhood of the point $t=0$:
\begin{gather*}\varphi(t,1)=t^2+\frac{1}{3}t^7+\frac{14}{45}t^{12}+\cdots.\end{gather*}
According to Theorem \ref{ratc}, the set of functions $(G_2,G_4,G_5,G_7)$, where
\begin{gather*}
G_i(t)=F_i(\varphi(t,1),t,1),\qquad i=2,4,5,7,
\end{gather*}
is a solution of the dynamical system~(I).

\medskip
\noindent{\bf Example 3.} Let $w^{(0)}=(q,1,0)$ such that $q^6=15q^3+45$. Then $w^{(0)}\in W$, $\sigma_1\big(w^{(0)}\big)={q}^{2}\big(2{q}^{3}-15\big)/15\neq0$, and $\varphi(t,0)=qt^{1/3}$ around $t=1$. We have
\begin{gather*}
F_2\big(qt^{1/3},t,0\big)=\frac{q}{6}t^{-2/3},\qquad F_4\big(qt^{1/3},t,0\big)=-\frac{5\big(q^3+15\big)}{36q^4}t^{-4/3}, \\
F_5\big(qt^{1/3},t,0\big)=\frac{q}{9}t^{-5/3},\qquad F_7\big(qt^{1/3},t,0\big)=-\frac{5(2q^3+3)}{54q\big(q^3+3\big)}t^{-7/3}.
\end{gather*}
According to Theorem~\ref{ratc}, the set of functions $(G_2,G_4,G_5,G_7)$, where
\begin{gather*}
G_i(t)=F_i\big(qt^{1/3},t,0\big),\qquad i=2,4,5,7,
\end{gather*}
is a solution of the dynamical system (I).

\subsection*{Acknowledgements}

The authors are grateful to Shigeki Matsutani for telling the relations between his paper \cite{M2} and our results in this paper, and for valuable comments.

\pdfbookmark[1]{References}{ref}
\LastPageEnding

\end{document}